\newtheorem*{theorema}{Theorem}
\newtheorem{lemma}{Lemma}[section]
\newtheorem{prop}[lemma]{Proposition}
\newtheorem{definition}[lemma]{Definition}
\begin{document}
\author{hiroki takahasi}

\address{Department of Mathematics,
Keio University, Yokohama,
223-8522, JAPAN} 
\email{hiroki@math.keio.ac.jp}
\subjclass[2010]{37D25, 37E30, 37G25}

\title[Birkhoff spectrum for H\'enon-like maps] 
{Birkhoff spectrum for H\'enon-like maps \\
at the first bifurcation}

\begin{abstract}
We effect a multifractal analysis for a strongly dissipative H\'enon-like map at the first bifurcation parameter
at which the uniform hyperbolicity is destroyed by the formation of tangencies inside the limit set.
We decompose the set of non wandering points on the unstable manifold 
  into level sets of Birkhoff averages of continuous functions, and
  derive a formula for
the Hausdorff dimension of the level sets in terms of the
entropy and unstable Lyapunov exponent of invariant probability measures.
  \end{abstract}

\maketitle

\section{introduction}
The multifractal analysis of chaotic dynamical systems consists in the study of fine geometric structures of invariant sets. 
One considers the so-called multifractal decompositions
of an invariant set, and the associated multifractal spectra which encodes this decomposition.
By connecting the spectra to other characteristics of the system,
such as entropy and Lyapunov exponents of invariant measures,
one tries to get more refined description of the underlying dynamics than purely stochastic considerations.

In this paper we treat the Birkhoff averages of continuous functions.
Although this type of problem is well-understood for uniformly hyperbolic systems,
 much less is known for non hyperbolic ones.
We treat certain non-hyperbolic two-dimensional maps at the boundary of uniform hyperbolicity,
having quadratic tangencies
between invariant manifolds.

We are concerned with a family of H\'enon-like diffeomorphisms
\begin{equation*}\label{henon}
f_a\colon(x,y)\in\mathbb R^2\mapsto(1-ax^2,0)+b\cdot\Phi(a,b,x,y),\quad a\in\mathbb R, \ 0<b\ll1.\end{equation*}
Here, $\Phi$ is bounded continuous in $(a,b,x,y)$ and 
$C^2$ in $(a,x,y)$. 
We assume there exists a constant $C>0$ such that for all $a$ near $2$
and small $b$,
\begin{equation*}\label{jacobian}\|D\log|\det Df_{a}|\|\leq C.\end{equation*}
This family describes the transition from uniformly hyperbolic to non hyperbolic regimes.
It is known \cite{BedSmi06,CLR08,DevNit79,Tak13} that there is
a {\it first bifurcation parameter} $a^*=a^*(b)$ with the following properties:
the non wandering
set of $f_a$ is a uniformly hyperbolic horseshoe for $a>a^*$;
for $a=a^*$ there is a unique orbit of homoclinic or heteroclinic tangency,
and the tangency is quadratic.
The aim of this paper is to perform the multifractal analysis of $f_{a^*}$.
Although the dynamics of $f_{a^*}$ resembles that of the horseshoe before the first bifurcation, 
the presence of tangency presents novel obstructions
for understanding the global dynamics.

We state our settings and goals in more precise terms.
Write $f$ for $f_{a^*}$. 
Let $P$, $Q$ denote the fixed saddles of $f$ near $(1/2,0)$, $(-1,0)$ respectively.
The orbit of tangency intersects a small neighborhood of the origin exactly at one point, denoted by $\zeta_0$ (FIGURE 1).
If $f$ preserves orientation, then $\zeta_0\in W^s(Q)\cap W^u(Q)$.
If $f$ reverses orientation, then $\zeta_0\in W^s(Q)\cap W^u(P)$.
The sole obstruction to uniform hyperbolicity is the orbit of the tangency $\zeta_0$.

Let $\Omega$ denote
the non wandering set of $f$, which is a compact set. 
If $f$ preserves orientation, let $W^u=W^u(Q)$. Otherwise,
let $W^u=W^u(P)$.
The (non-uniform) expansion along $W^u$ is responsible for the chaotic behavior of $f$.
Hence, a good deal of multifractal information of $\Omega$ 
is contained in its unstable slice
$$\Omega^u=\Omega\cap W^u.$$

\begin{figure}
\begin{center}
\includegraphics[height=6.5cm,width=14cm]
{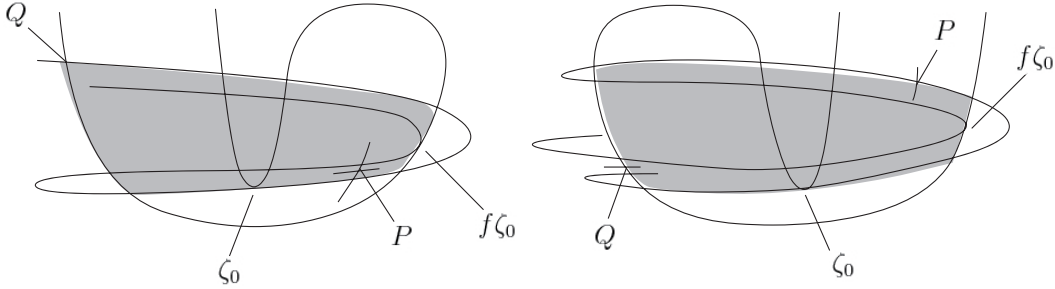}
\caption{Organization of invariant manifolds of the 
fixed saddles $P$, $Q$ of $f=f_{a^*}$ near $(1/2,0)$, $(-1,0)$
respectively. In the case $\det Df>0$ (left),
the stable and unstable manifolds of $Q$ meet each other tangentially. In the case $\det Df<0$
(right), the stable manifold of $Q$ meets the unstable manifold of $P$ tangentially. The shaded
regions represent the region $R$ containing the non wandering set $\Omega$ (see Sect.\ref{family}).}
\end{center}
\end{figure}

Given a continuous function $\varphi\colon \Omega\to \mathbb R$ consider {\it level sets} of the Birkhoff averages of $\varphi$:
$$\Omega_\varphi^u(\beta)=\left\{x\in \Omega^u\colon\lim_{n\to\infty}
\frac{1}{n}S_n\varphi(x)=\beta\right\},\ \ \beta\in\mathbb R,$$
where $S_n\varphi=\sum_{i=0}^{n-1}\varphi\circ f^i$.
Define
$$c_\varphi=\inf_{x\in \Omega}\liminf_{n\to\infty}\frac{1}{n}S_n\varphi(x)
\ \text{ and } \ d_\varphi=\sup_{x\in\Omega}\limsup_{n\to\infty}\frac{1}{n}S_n\varphi(x).$$
In what follows we assume 
 $c_\varphi<d_\varphi$. 
 For otherwise the Birkhoff averages of $\varphi$ along
all orbits are equal. 
Set $I_\varphi=[c_\varphi,d_\varphi]$. Consider the multifractal decomposition 
\begin{equation*}\label{decomposition}\Omega^u
=\left(\bigcup_{\beta\in I_\varphi}
\Omega_\varphi^u(\beta)\right)\cup \hat \Omega_\varphi^u,\end{equation*} 
where
$\hat \Omega_\varphi^u$ denotes the set of points in $\Omega^u$ for which 
$(1/n)S_n\varphi(x)$ does not converge.
This decomposition has extremely complicated topological structures:
each $\Omega_\varphi^u(\beta)$ is nonempty (See Sect.\ref{lowest} for details);
one can show that each set appearing in the decomposition is dense in $\Omega^u$;
namely, a decomposition into an uncountable number of dense subsets.

Let 
$\mathcal M(f)$ denote the set of $f$-invariant
Borel probability measures. The entropy of $\mu\in\mathcal M(f)$ is denoted by $h(\mu)$.
An {\it unstable Lyapunov exponent} of $\mu\in\mathcal M(f)$ is the number $\lambda^u(\mu)$
defined by 
$$\lambda^u(\mu)=\int\log J^u(x) d\mu(x).$$
Here, $J^u(x)=\Vert D_xf|E^u_x\Vert $, and
$E^u_x$ is a one-dimensional subspace of $T_x\mathbb R^2$ called
 an \emph{unstable direction} at $x\in\Omega$ that is characterized by
the following backward contraction property  \cite{SenTak1}:
\begin{equation*}
\limsup_{n\to\infty}\frac{1}{n}\log\|D_xf^{-n}|E^u_x\|<0.\end{equation*}
By a result of \cite{CLR08}, $\inf\{\lambda^u(\mu)\colon\mu\in\mathcal M(f)\}>0$. 
Relationships between entropy, unstable Lyapunov exponents and dimension of
invariant probability measures were established in \cite{LedYou85}.
Our main result connects these characteristics to the Hausdorff dimension of $\Omega_\varphi^u(\beta)$
that is defined as follows.
 Given $p\in(0,1]$
 the unstable Hausdorff 
$p$-measure of a set $A\subset W^u$ is defined by
$$m_p^u(A)=\lim_{\delta\to0}\left(\inf\sum_{U\in\mathcal
U } {\rm length}(U)^p \right),$$ where ${\rm length}(\cdot)$ denotes the length on $W^u$ with respect to the induced Riemannian metric,
and the infimum is taken over all
coverings $\mathcal U$ of $A$ by open sets of $W^u$ with
length $\leq\delta$. The unstable Hausdorff dimension of $A$,
denoted by $\dim_H^u$, is the unique number in $[0,1]$ such that
$$\dim_H^u(A)=\sup\{p\colon
m_p^u(A)=\infty\}=\inf\{p\colon m_p^u(A)=0\}.$$

Now, set
$$B_\varphi^u(\beta)=\dim_H^u(\Omega_\varphi^u(\beta)),$$
and
$$I_\varphi'=\{\beta\in I_\varphi
\colon B_\varphi^u(\beta)\leq 2/\log(1/ b)\}.$$
Our main result is stated as follows.

\begin{theorema}
Let $b>0$ be sufficiently small and $f=f_{a^*(b)}$ as above. 
For any continuous function $\varphi\colon \Omega\to\mathbb R$ and
all $\beta\in I_\varphi\setminus I_\varphi'$,
$$B_\varphi^u(\beta)=\lim_{\varepsilon\to0}\sup\left\{\frac{h(\mu)}{\lambda^u(\mu)}\colon \mu\in\mathcal M(f),\left|\int\varphi d\mu-\beta\right|<\varepsilon\right\}.$$
\end{theorema}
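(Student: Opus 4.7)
The plan is to establish matching upper and lower bounds, reducing the dynamics to a countable Markov model where the classical multifractal formalism applies, and then to control the residue left outside this model via the strong dissipation.

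\emph{Lower bound.} By Pesin--Ledrappier--Young theory applied to an ergodic $\mu\in\mathcal M(f)$ (which is hyperbolic since $\lambda^u(\mu)>0$ by \cite{CLR08} and the Jacobian is uniformly small), the pointwise unstable dimension of $\mu$ equals $h(\mu)/\lambda^u(\mu)$ and $\mu$-a.e.\ point lies in $\Omega_\varphi^u(\int\varphi\,d\mu)$. The difficulty is that the set of measures with $\int\varphi\,d\mu=\beta$ need not contain a maximizer, so the direct projection of a single ergodic measure does not suffice. I would produce, for each $\varepsilon>0$ and each near-maximizer $\mu$ with $|\int\varphi\,d\mu-\beta|<\varepsilon$, a Cantor subset of $\Omega_\varphi^u(\beta)$ by a concatenation--shadowing argument on the countable Markov model: approximate $\mu$ by a measure supported on a finite sub-horseshoe and then splice its generic orbits with those of a second ergodic measure to correct the Birkhoff average back to $\beta$. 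The mass-distribution principle applied to a Bernoulli-like measure on this Cantor set yields $B_\varphi^u(\beta)\geq h(\mu)/\lambda^u(\mu)-O(\varepsilon)$; letting $\varepsilon\to 0$ completes this half.

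\emph{Upper bound.} Recode $\Omega^u$ via the induced Markov structure developed in \cite{CLR08,SenTak1}, which partitions $\Omega^u$ into countably many unstable arcs carrying a first-return map that is uniformly expanding with bounded distortion. On this symbolic model the formalism is classical: cover $\Omega_\varphi^u(\beta)$ at each symbolic depth by cylinders whose unstable lengths are governed by the Birkhoff sum of $-\log J^u$, and count the relevant cylinders via the variational principle (equivalently, a subadditive pressure bound applied level by level). This delivers
\[
B_\varphi^u(\beta)\leq\sup\left\{\frac{h(\mu)}{\lambda^u(\mu)}:\mu\in\mathcal M(f),\ \left|\int\varphi\,d\mu-\beta\right|<\varepsilon\right\}
\]
for every $\varepsilon>0$, provided the contribution of points lying outside the symbolic model is shown to be dimensionally negligible.

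\emph{Main obstacle.} The tangency orbit $\{f^n(\zeta_0)\}$ obstructs any uniformly hyperbolic coding: orbits in $\Omega^u$ may shadow $\zeta_0$ for arbitrarily long stretches, so the induced return times are unbounded. Write $\Omega_{\mathrm{bad}}^u$ for the points whose return times grow exceptionally fast. The crux is to prove $\dim_H^u(\Omega_{\mathrm{bad}}^u)\leq 2/\log(1/b)$, using that each deep tangential return costs expansion of order $\log(1/b)$ while contributing only bounded entropy per return. This is precisely why the conclusion is asserted only for $\beta\in I_\varphi\setminus I_\varphi'$: when $B_\varphi^u(\beta)>2/\log(1/b)$ the residual set is dimensionally negligible and the symbolic formula computes the full spectrum. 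Quantifying this bound uniformly in $\varphi$, and reconciling the strict pointwise Birkhoff condition with the $\varepsilon$-relaxation on the right-hand side, will be the technical core of the proof.
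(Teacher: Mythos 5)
Your overall architecture matches the paper's: a horseshoe/concatenation construction for the lower bound, a cylinder-counting/variational argument for the upper bound on a ``good'' part, and a separate estimate $\dim_H^u\leq 2/\log(1/b)$ for a residual set, which is exactly the reason for restricting $\beta$ to $I_\varphi\setminus I_\varphi'$. However, the two ingredients you lean on are not available in the form you state them, and they are precisely where the proof lives. First, there is no countable Markov recoding of $\Omega^u$ by a first-return map that is ``uniformly expanding with bounded distortion'': distortion of the induced map, and of Birkhoff sums of a H\"older $\varphi$, over a proper rectangle degenerates as orbits approach the tangency arbitrarily deeply. The paper handles this by stratifying $\Omega^u$ into the slow-recurrence sets $G_m^u=\{x\colon d_{\rm crit}^u(f^nx)>b^{n/9}\ \text{for all } n\geq m\}$, proving distortion bounds whose constants $D_m$, $K_{m,\varphi}$ depend on $m$ (Lemmas \ref{lyapbdd} and \ref{variation}), running your counting argument separately on each $\Omega_\varphi^u(\beta)\cap G_m^u$ (Proposition \ref{c}, where periodic-orbit measures on the coded sub-horseshoe produce an invariant $\mu$ with $|\int\varphi d\mu-\beta|<\varepsilon$; the H\"older distortion lemma is exactly what reconciles the pointwise Birkhoff condition with the $\varepsilon$-relaxation, and merely continuous $\varphi$ is treated by H\"older approximation), and then invoking countable stability of $\dim_H^u$ over $m$. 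Your proposal flags this reconciliation as ``the technical core'' but offers no mechanism for it.

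Second, the bad set is not ``points whose return times grow exceptionally fast'' but $\Omega_*^u=\Omega^u\setminus\bigcup_m G_m^u$, i.e.\ points with $d_{\rm crit}^u(f^nx)\leq b^{n/9}$ for infinitely many $n$, and your accounting (``bounded entropy per deep return versus expansion cost of order $\log(1/b)$ per return'') does not close: between consecutive deep returns separated by time $l$ there are up to $2^{l}$ admissible itineraries, so the entropy contribution is exponential in elapsed time, not bounded per return. What makes Proposition \ref{new} work is (i) a return of depth $b^{l/9}$ forces binding to the non-returning critical orbit for at least $2\eta l$ iterates with $\eta\approx\frac{1}{20}\log(1/b)$, so deep-return times grow geometrically; (ii) the per-unit-time contraction $b^{l/9}$ of the corresponding curves beats the per-unit-time factor $2$, and a Stirling-type count of the possible gap sequences absorbs the remaining combinatorics, yielding the $2/\log(1/b)$ bound. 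Your lower-bound sketch is essentially the paper's (Katok-type horseshoes with controlled Birkhoff averages of $\varphi$, glued along a sequence $\mu_n$ with $\int\varphi d\mu_n\to\beta$), though note that reducing to ergodic measures requires the entropy-density argument of Lemma \ref{approximate} rather than Ledrappier--Young theory, which is not what drives the construction here.
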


This type of formula has been proved under different settings and 
 assumptions on the hyperbolicity of the systems:
  uniformly hyperbolic ones
 \cite{BarSau01,Ols03, PesWei01, Wei99} (a more complete list of previous results
  can be found in \cite{Pes97});
 maps with parabolic fixed points \cite{Chu10,JohJorObePol10};
certain non-uniformly expanding quadratic maps on the interval \cite{Chu10,ChuTak13}. 
Up to present,
 many of the known results for non hyperbolic systems are limited to the Lyapunov spectrum 
  \cite{GelPrzRam10,GelRam09,IomTod11,KesStr04,Tak13'}.
  
  In \cite{Chu10}, Chung established the formula for a class of one-dimensional maps
  admitting ``nice" induced Markov maps.   
 A strategy for a proof of our theorem is to
 use a (locally defined) stable foliation to identify points on the same leaf
(called {\it long stable leaves} in \cite[Sect.2.8]{Tak13'}), and to extend the one-dimensional argument
 in \cite{Chu10}.
 The same strategy has been taken in
  \cite{Tak13'} in which the non continuous function $\log J^u$ was treated instead of $\varphi$.
  Since the stable foliation is not globally defined,
 it is not possible to tell whether such a leaf through a given point exist.
 The argument in \cite{Tak13'} to handle this difficulty consists of three steps:
 (i)  introduce {\it dynamically critical points} in the spirit of Benedicks and Carleson \cite{BenCar91},
 and define a {\it bad set} in terms of the recurrence to the critical points;
 (ii) show that long stable leaves exist for points outside of the bad set;
 (iii) show that the Birkhoff averages of $\log J^u$ do not converge on the bad set. 
  A novel obstruction in dealing with continuous $\varphi$ is that the Birkhoff averages of $\varphi$ can converge,
  for points in the bad set (denoted by 
  $\Omega_*^u$ in Sect.\ref{outline}). 
 What we can do at best is to show that the dimension of $\Omega_*^u$ is small, and
 establish the formula for those $\beta$ for which $B_\varphi(\beta)$ is not too small.
 This is the reason for the restriction on $\beta$ in the theorem.

To clarify the range of $\beta$ for which the formula in the theorem holds,
  let us recall the thermodynamic formalism of $f$
 developed in \cite{SenTak1,SenTak2}.
 For $t\in\mathbb R$ define
 $$P(t)=\sup\left\{h(\mu)-t\lambda^u(\mu)\colon\mu\in\mathcal M(f)\right\}.$$
 A measure which attains this supremum is called an {\it equilibrium measure} for $-t\log J^u$.
The function $t\mapsto P(t)$ is convex. One has $P(0)>0$, and Ruelle's inequality \cite{Rue78} gives $P(1)\leq0$.
Since $f$ has no SRB measure \cite{Tak12}, $P(1)<0$ holds. Hence the equation $P(t)=0$ has a unique solution in $(0,1)$,
denoted by $t^u$.
 There exists a unique equilibrium measure for $-t^u\log J^u$
 (\cite[Theorem A]{SenTak2}),
 denoted by $\mu_{t^u}$, and $t^u=\dim_H^u(\Omega^u)$, $t^u\to1$ as $b\to0$ (\cite[Theorem B]{SenTak2}).
 From the theorem and the Ergodic Theorem, 
$B_\varphi^u(\int\varphi d\mu_{t^u})\geq h(\mu_{t^u})/\lambda(\mu_{t^u})=t^u$.
It follows that $B_\varphi^u$ takes its maximum at $\beta=\int\varphi d\mu_{t^u}$.
Similarly to the proof of \cite[Theorem C]{Tak13'}
one can show that $B_\varphi^u$ is continuous on $I_\varphi\setminus I_\varphi'$,
increasing on $\{\beta\in I_\varphi\setminus I_\varphi'\colon
\beta<\int\varphi d\mu_{t^u}\}$ and decreasing on  $\{\beta\in I_\varphi
\setminus I_\varphi'\colon
\beta>\int\varphi d\mu_{t^u}\}$, so that
the set $I_\varphi\setminus I_\varphi'$ is an interval containing $t^u$.

 The rest of this paper consists of two sections.
Sect.2 is a preliminary, and the theorem is proved 
in Sect.3.

\section{Preliminaries}
The main reference of this section is \cite{Tak13'}. We collect several results and constructions, and  
prove two lemmas needed for the proof of the theorem.

Throughout this paper we shall be concerned with positive constants $\delta$, $b$, 
chosen in this order.
 The letter $C$ is used to denote any positive constant which is independent of 
 $\delta$ or $b$.

\subsection{The non wandering set}\label{family}

By a {\it rectangle} we mean any
compact domain bordered by two compact curves in $W^u$ and two in the
stable manifolds of $P$ or $Q$. By an {\it unstable side} of a
rectangle we mean any of the two boundary curves in $W^u$. A {\it
stable side} is defined similarly.

By the result of \cite[Lemma 3.2]{Tak13} there exists a rectangle $R$ contained in the set 
$\{(x,y)\in \mathbb R^2\colon |x|<2, |y|< \sqrt{b}\}$ with the following properties (See FIGURE 1):

\begin{itemize}
\item $\displaystyle{\Omega=\{x\in R\colon f^nx\in R\ \text{ for every }n\in\mathbb Z\}}$;

\item one of the unstable sides of $R$ contains $\zeta_0$;

\item one of the stable sides of $R$ contains $f\zeta_0$. This side is denoted by $\alpha_0^+$. The other side, denoted by $\alpha_0^-$,  contains $Q$;

\item $f\alpha_0^+\subset\alpha_0^-$.
\end{itemize}

\subsection{Critical points}\label{critical}

Set
$$I(\delta)=\{(x,y)\in R\colon |x|<\delta\}.$$
The derivatives grow exponentially, as long as the orbit is outside of $I(\delta)$.
To treat returns to $I(\delta)$ we mimic the strategy of Benedicks $\&$ Carleson \cite{BenCar91} and
 introduce the notion of critical points. 
The reference for the contents in this subsection is \cite[Sect.2.4 $\&$ Sect.2.5]{Tak13'}.

From
the hyperbolicity of the saddle $Q$,
there exist two mutually disjoint connected open sets $U^-$, $U^+$ independent of $b$ such that
$\alpha_0^-\subset U^-$, $\alpha_0^+\subset U^+$, $U^+\cap fU^+=\emptyset=U^+\cap fU^-$ and 
a foliation $\mathcal F^s$ of $U=U^-\cup U^+$ by one-dimensional leaves such
that: 
\begin{itemize}
\item $\mathcal F^s(Q)$, the leaf of $\mathcal F^s$ containing $Q$,
contains $\alpha_0^-$; 
\item if $x,fx\in U$, then $f(\mathcal F^s(x))
\subset\mathcal F^s(fx)$;

\item Let $e^s(x)$ denote the unit vector in $T_x\mathcal F^s(x)$ whose second component is positive. 
Then $x\mapsto e^s(x)$ is $C^{1}$, $\|D_xfe^s(x)\|\leq Cb$ and $\|D_xe^s(x)\|\leq C$;

\item If $x,fx\in U$, then $s(e^s(x))\geq
C/\sqrt{b}.$
\end{itemize}
\begin{definition}
{\rm We say $\zeta\in W^u\cap I(\delta)$ is a {\it critical point} if $f\zeta\in U^+$ and
$T_{f\zeta}W^u=T_{f\zeta}\mathcal F^s(f\zeta)$.}
\end{definition}

From the first two conditions on $\mathcal F^s$ and $f\alpha_0^+\subset\alpha_0^-$, there is a leaf of $\mathcal F^s$ which 
contains $\alpha_0^+$. Since $f\zeta_0\in\alpha_0^+$ we have 
$f\zeta_0\in U^+$ and
$T_{f\zeta_0}W^u=T_{f\zeta_0}\mathcal F^s(f\zeta_0)$,
namely, $\zeta_0$ is a critical point.

To locate all other critical points we need some preliminary considerations.
Let $\alpha_1^+$ denote the (connected) component of $W^s(P)\cap R$ containing $P$,
and $\alpha_1^-$ the component of $f^{-1}\alpha_1^+\cap R$ not containing $P$. 
Let $\Theta$ denote the rectangle bordered by $\alpha_1^-$, $\alpha_1^+$ and the unstable sides of $R$.
Let $\tilde\Gamma^u$ denote the collection of 
components of $\Theta\cap W^u$. 
By a \emph{$C^2(b)$-curve} we mean a compact, nearly horizontal $C^2$ curve in $R$ such that the slopes of its tangent directions are $\leq\sqrt{b}$ and the
curvature is everywhere $\leq\sqrt{b}$. 
Let $S$ denote the compact lenticular domain 
which is bounded by the parabola $f^{-1}\alpha_0^+\cap R$ and the unstable side of $R$ containing $\zeta_0$.
Then the following holds \cite[Lemma 2.5 $\&$ Lemma 2.8]{Tak13'}:

\begin{itemize}
\item {\bf (Location)} any element of $\tilde\Gamma^u$ 
is a $C^2(b)$-curve with endpoints in $\alpha_1^-$, $\alpha_1^+$,
and contains a unique critical point;
\item {\bf (Non recurrence)}
all critical points are contained in $S$. 
\end{itemize}
In particular, all critical points never return to the interior of $R$ under forward iteration.
The dynamics of $f$ is amenable to analysis primarily due to this non-recurrence of critical points.
To recover the loss of derivatives suffered from the return to $I(\delta)$, we bind the point
to a suitable critical point \cite[Lemma 2.9]{Tak13'}, and let it copy the exponential derivative growth along the critical orbit
\cite[Lemma 2.6]{Tak13'}.

\subsection{Inducing}\label{induced map}
We introduce an inducing scheme associated with the first return map to $\Theta$.
The reference for the contents in this subsection is \cite[Sect.2.10]{Tak13'}.

Define a sequence $\{\tilde\alpha_n\}_{n=0}^\infty$ of compact curves in $R\cap W^s(P)$ inductively as follows.
First, set $\tilde\alpha_0=\alpha_1^+$. Given $\tilde\alpha_{n-1}$, define $\tilde\alpha_n$ to be one of the two connected components of $f^{-1}\tilde\alpha_{n-1}\cap R$ which is at the left of $\zeta_0$. Observe that $\tilde\alpha_1=\alpha_1^-$. By the Inclination Lemma, 
the Hausdorff distance between $\tilde\alpha_n$ and $\alpha_0^-$ converges to $0$ as $n\to\infty$.

For each $n\geq0$ let $\alpha_n$ denote the connected component of $R\cap f^{-1}\tilde\alpha_n$ 
which is not $\tilde\alpha_{n+1}$. 
The set $R\cap f^{-1}\alpha_n$ consists of two curves, one at the left
of $\zeta_0$ and the other at the right. They are denoted by $\alpha_{n+1}^-$,
$\alpha_{n+1}^+$ respectively.
By definition, these curves obey the following diagram
\begin{equation*}
\{\alpha_{n+1}^-,\alpha_{n+1}^+\}\stackrel{f^2}{\to}\tilde\alpha_n
\stackrel{f}{\to}\tilde\alpha_{n-1}
\stackrel{f}{\to}\tilde\alpha_{n-2}\stackrel{f}{\to}\cdots
\stackrel{f}{\to}\tilde\alpha_1=\alpha_1^-\stackrel{f}{\to}
\tilde\alpha_0=\alpha_1^+.\end{equation*}

Define $r\colon \Theta\to \mathbb N\cup\{\infty\}$ by
$$r(x)=\inf(\{n>0\colon f^nx\in\Theta\}\cup\{\infty\}),$$
which is the first return time of $x$ to $\Theta$.
Note that:
\begin{itemize}

\item $r(x)=1$ if and only if $x\in\alpha_1^-\cup\alpha_1^+$;
$r(x)=n+1$ $(n\geq1)$ if and only if $x$ is sandwiched by $\alpha_{n}^+$ and $\alpha_{n+1}^+$, or
by $\alpha_{n}^-$ and $\alpha_{n+1}^-$;
 $r(x)=\infty$ if and only if $x\in S$;

\item each level set of $r$ except $S$ has exactly two connected components.

\end{itemize}

Let $\mathcal P$ denote the partition of the set $\Theta\setminus(S\cup\alpha_1^-\cup\alpha_1^+)$ into connected components of the level sets of 
the function $r$. 
The $\mathcal P$ is well-defined because the Hausdorff distance between
$\alpha_n$ and $\alpha_0^+$ converges to $0$ as $n\to\infty$. 
 Set $\mathcal P_1=\{\omega=\overline{\eta}\colon\eta\in\mathcal P\}$,
where the bar denotes the closure operation.
For each $n\geq2$ define
$$\mathcal P_n=\left\{\omega_0\cap \bigcap_{i=1}^{n-1} f^{-r(\omega_0)}\circ f^{-r(\omega_1)}
\circ\cdots\circ f^{-r(\omega_{i-1})}\omega_i
\colon\omega_0,\omega_1,\ldots,\omega_{n-1}\in \mathcal P_1\right\}.$$
Elements of $\bigcup_{n\geq0}\mathcal P_n$ are called \emph{proper rectangles.}
The unstable sides of a proper rectangle are formed by two curves contained in the unstable sides of $\Theta$.
Its stable sides are formed by two curves contained in $W^s(P)$.

On the interior of each $\omega\in\mathcal P_1$, the value of $r$ is constant. This value is denoted by $r(\omega)$.
For each $\omega\in\mathcal P_n$ 
define its \emph{inducing time} $\tau(\omega)$ by
\begin{equation*}\label{tau}\tau(\omega)=\sum_{i=0}^{n-1}r(\omega_i).\end{equation*}
Clearly, the unstable sides of $f^{\tau(\omega)}\omega$ are formed by two curves in $\tilde\Gamma^u$.
Its stable sides are formed by two curves contained in the stable sides of $\Theta$ (See FIGURE 2).

The next bounded distortion result is contained in \cite[Lemma 2.15]{Tak13'}.

\begin{lemma}\label{global}
For any $\gamma^u\in\tilde\Gamma^u$ and any proper rectangle $\omega$,
$\gamma^u\cap\omega$ is a compact curve joining the stable sides of $\omega$. In addition,
$$\sup_{x,y\in\gamma^u\cap\omega}
\frac{\|D_yf^{\tau(\omega)}|E_y^u\|}{\|D_xf^{\tau(\omega)}|E_x^u\|}        \leq
C|f^{\tau(\omega)}x-f^{\tau(\omega)}y|.$$
\end{lemma}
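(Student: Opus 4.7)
\medskip

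\textbf{Proof proposal.}

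For the geometric assertion, my plan is to argue that since $\omega$ is a proper rectangle, its unstable sides lie on the two unstable sides of $\Theta$, and its stable sides are contained in $W^s(P)$ and stretch all the way from one unstable side of $\Theta$ to the other. Any $\gamma^u\in\tilde\Gamma^u$ is a $C^2(b)$-curve with endpoints on $\alpha_1^-,\alpha_1^+$ (the two stable sides of $\Theta$). Because such a curve is nearly horizontal (slope $\le\sqrt b$) and the stable sides of $\omega$ are almost vertical (tangents close to the stable directions, hence of slope $\ge C/\sqrt b$), elementary transversality shows $\gamma^u$ crosses each stable side of $\omega$ in exactly one point. Since the interior of $\omega$ is disjoint from the stable sides of $\Theta$, the arc $\gamma^u\cap\omega$ is a compact curve from one stable side of $\omega$ to the other.

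For the distortion estimate, I would proceed by induction on the depth $n$ of $\omega\in\mathcal P_n$. Write $\omega=\omega_0\cap f^{-r(\omega_0)}(\omega_1\cap\cdots)$ and factor $f^{\tau(\omega)}=f^{r(\omega_{n-1})}\circ\cdots\circ f^{r(\omega_0)}$. The key building block is a single-return distortion estimate: for any $\omega_i\in\mathcal P_1$ and any $C^2(b)$-curve $\sigma\subset\omega_i$ whose image $f^{r(\omega_i)}\sigma$ crosses $\Theta$,
\[
\Bigl|\log\|D_xf^{r(\omega_i)}|E_x^u\|-\log\|D_yf^{r(\omega_i)}|E_y^u\|\Bigr|\le C\,|f^{r(\omega_i)}x-f^{r(\omega_i)}y|\quad(x,y\in\sigma).
\]
Granting this, the telescoping chain rule
\[
\log\frac{\|D_yf^{\tau(\omega)}|E_y^u\|}{\|D_xf^{\tau(\omega)}|E_x^u\|}=\sum_{i=0}^{n-1}\Bigl(\log\|D_{y_i}f^{r(\omega_i)}|E_{y_i}^u\|-\log\|D_{x_i}f^{r(\omega_i)}|E_{x_i}^u\|\Bigr),
\]
where $x_i=f^{r(\omega_0)+\cdots+r(\omega_{i-1})}x$ and similarly for $y_i$, reduces everything to controlling the sum $\sum_{i=0}^{n-1}|f^{r(\omega_0)+\cdots+r(\omega_i)}x-f^{r(\omega_0)+\cdots+r(\omega_i)}y|$. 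Since each intermediate image $f^{r(\omega_0)+\cdots+r(\omega_i)}(\gamma^u\cap\omega)$ lies in a curve of $\tilde\Gamma^u$ crossing $\omega_{i+1}\cup\cdots\cup\omega_{n-1}$, the backward contraction property of $E^u$ (together with the fact that the remaining compositions of first-return maps are uniformly expanding on such curves) yields
\[
|f^{r(\omega_0)+\cdots+r(\omega_i)}x-f^{r(\omega_0)+\cdots+r(\omega_i)}y|\le C\lambda^{-(n-1-i)}|f^{\tau(\omega)}x-f^{\tau(\omega)}y|
\]
for some $\lambda>1$, so the sum is a geometric series bounded by $C|f^{\tau(\omega)}x-f^{\tau(\omega)}y|$. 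Exponentiating gives the claim.

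The main obstacle is the single-return estimate, because $r(\omega_i)$ can be arbitrarily large when $\omega_i$ is close to the accumulation curve $\alpha_0^+$, and during these long excursions the orbit visits $I(\delta)$ where $\log J^u$ has very large derivative. To handle this I would, following the binding/critical point machinery recalled in Sect.\ref{critical}, split the orbit segment of length $r(\omega_i)$ into the portion outside $I(\delta)$ (where a standard argument using $\|D\log|\det Df|\|\le C$ and the uniform lower bound on $\|E^u\|$-angles produces a distortion bound of the required form) and the portion following a deep return to $I(\delta)$, which is bound to a suitable critical point and therefore shadows the critical orbit's derivative growth. The non-recurrence of critical points (Sect.\ref{critical}) guarantees that no secondary corrections intervene, so each segment contributes $C$ times the separation of images, and summing over segments yields the desired single-return inequality. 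After that, the inductive telescoping in the previous paragraph concludes the argument.
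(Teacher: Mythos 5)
First, a point of comparison: the paper does not actually prove Lemma \ref{global}; it is quoted from \cite[Lemma 2.15]{Tak13'}, so there is no in-paper argument to measure your sketch against. Your architecture --- transversality of the nearly horizontal $C^2(b)$-curve with the nearly vertical stable sides of $\omega$ for the geometric claim, then a telescoping of the distortion over first-return blocks combined with geometric decay of intermediate separations --- is the standard route and very likely the shape of the cited proof.

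As a proof, however, your text has genuine gaps, concentrated precisely in the two inputs you assert rather than establish. (i) The single-return estimate $\bigl|\log\|D_xf^{r(\omega_i)}|E_x^u\|-\log\|D_yf^{r(\omega_i)}|E_y^u\|\bigr|\le C\,|f^{r(\omega_i)}x-f^{r(\omega_i)}y|$ with $C$ independent of $r(\omega_i)$ is exactly where the tangency enters: pieces with large return time lie close to the critical point on $\gamma^u$, so at the first step of the block $\log J^u$ has Lipschitz constant of order (distance to the critical point)$^{-1}$, and this must be offset against the derivative recovery during the bound period. Your description of this step also misplaces the geometry: since $I(\delta)\subset\Theta$, the orbit does not visit $I(\delta)$ during the excursion outside $\Theta$ --- the delicate moment is the single passage near the fold at the start of the block, not ``deep returns during long excursions''; moreover controlling the ratio of $\|Df|E^u\|$ at two nearby points requires comparing unstable directions along the iterated curves (curvature control of $C^2(b)$-curves), not only $\|D\log|\det Df|\|\le C$ and angle bounds. (ii) The bound $|f^{r(\omega_0)+\cdots+r(\omega_i)}x-f^{r(\omega_0)+\cdots+r(\omega_i)}y|\le C\lambda^{-(n-1-i)}|f^{\tau(\omega)}x-f^{\tau(\omega)}y|$ needs uniform expansion of the induced return map along unstable curves; the backward contraction property you invoke is an asymptotic, pointwise statement (a limsup with point-dependent constants) and does not by itself give a uniform $\lambda>1$ over all pieces, so this must be imported from the expansion estimates for the inducing scheme in \cite{Tak13'}. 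Finally, your argument yields the conclusion in the form $\sup_{x,y}\|D_yf^{\tau(\omega)}|E_y^u\|/\|D_xf^{\tau(\omega)}|E_x^u\|\le\exp\left(C|f^{\tau(\omega)}x-f^{\tau(\omega)}y|\right)$; the inequality as printed in the lemma cannot hold literally (take $x=y$), so the exponential form is what you should be proving, and ``exponentiating gives the claim'' does not produce the stated bound.
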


\begin{figure}
\begin{center}
\includegraphics[height=4cm,width=10cm]{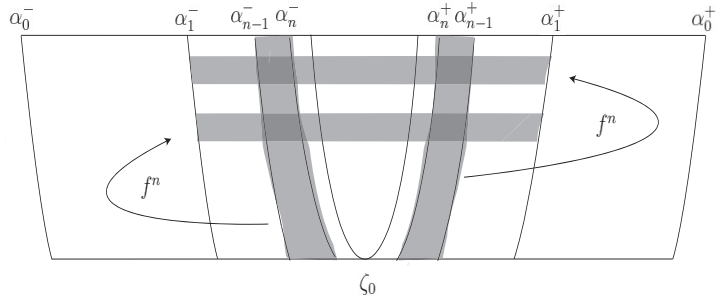}
\caption{The proper rectangles (shaded) in $\mathcal P_1$ with inducing time $n$ and their $f^n$-images}
\end{center}
\end{figure}

\subsection{Horseshoes}\label{horse}
We introduce a horseshoe structure which naturally comes from the inducing scheme in 
Sect.\ref{induced map}. The reference for the contents in this subsection is \cite[Sect.2.10]{Tak13'}.

Let $\mathcal A$ be a finite collection of
proper rectangles contained in the interior of $\Theta$, 
labeled with $1,2,\ldots,\ell=\#\mathcal A$. We assume any two elements of $\mathcal A$ are either disjoint,
or intersect each other only at their stable sides.
Endow $\Sigma_{\ell}=\{1,\ldots,\ell\}^{\mathbb Z}$ with the product topology 
of the discrete topology,
and let $\sigma\colon\Sigma_{\ell}\circlearrowleft$ denote the left shift.
Define a coding map $\pi\colon\Sigma_{\ell}\to\mathbb R^2$ by
$\pi(\{x_i\}_{i\in\mathbb Z})=y$, where
$$\{y\}=\left(\bigcap_{k=1}^{\infty}\omega_k^s\right)
\cap\left(\bigcap_{k=1}^{\infty}\omega_k^u\right)$$ 
and
 $$\omega^s_k=\omega_{x_0}\cap\left(\bigcap_{i=1}^k 
f^{-\tau(\omega_{x_0})}\circ\cdots\circ f^{-\tau(\omega_{x_{i-1}})}\omega_{x_i}\right)\text{ and }
\omega^u_k=\bigcap_{i=1}^{k}f^{\tau(\omega_{x_{-1}})}\circ\cdots\circ f^{\tau(\omega_{x_{-i}})}\omega_{x_{-i}}.$$

\begin{lemma}\cite[Lemma 2.19]{Tak13'}\label{hyp}
The map $\pi$ is well-defined, continuous, injective, and satisfies $\pi(\Sigma_{\ell})\subset\Omega$.
\end{lemma}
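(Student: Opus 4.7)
The plan is to establish the four claims (well-definedness, continuity, injectivity and the inclusion $\pi(\Sigma_\ell)\subset\Omega$) in turn, exploiting the horseshoe-like nested structure of $\omega_k^s$ and $\omega_k^u$.

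Well-definedness is the main quantitative step. I would first show that $\omega_k^s$ is a decreasing sequence of proper rectangles inside $\omega_{x_0}$, each joining the two unstable sides of $\Theta$, whose transverse (unstable) width shrinks exponentially. Concretely, $f^{\tau(\omega_{x_0})+\cdots+\tau(\omega_{x_{k-1}})}$ sends $\omega_k^s$ into $\omega_{x_k}\subset\Theta$, so the image has bounded unstable diameter; combining this with the bounded distortion estimate of Lemma \ref{global} and the uniform expansion of $J^u$ under the induced first-return dynamics (itself a consequence of the critical-point binding and the non-recurrence of critical points recalled in Sect.\ref{critical}), the unstable length of $\omega_k^s$ decays at rate $e^{-c(\tau(\omega_{x_0})+\cdots+\tau(\omega_{x_{k-1}}))}$. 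Hence $\bigcap_k\omega_k^s$ collapses to a single stable-type curve $\gamma^s$ nearly tangent to $e^s$. An analogous argument, run in the backward direction via the inverse of the induced map, yields $\bigcap_k\omega_k^u=\gamma^u$, a single $C^2(b)$-curve in $\tilde\Gamma^u$. Since $\gamma^u$ has slope at most $\sqrt{b}$ while $\gamma^s$ has slope at least $C/\sqrt{b}$, the two curves cross transversally at exactly one point, which we take to be $\pi(\{x_i\})$.

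Continuity then follows automatically: if $\{x_i^{(n)}\}\to\{x_i\}$ in $\Sigma_\ell$, then for every $k$ the points $\pi(\{x_i^{(n)}\})$ and $\pi(\{x_i\})$ both lie in $\omega_k^s\cap\omega_k^u$ for all large $n$, and this common set has diameter tending to $0$ with $k$. For injectivity, I would suppose $\pi(\{x_i\})=\pi(\{y_i\})$ for distinct sequences and use the conjugacy between $\sigma$ and the first-return induced dynamics on $\Theta$ to shift the first disagreement to coordinate $0$. The common point then lies in $\omega_{x_0}\cap\omega_{y_0}$, which by the standing hypothesis on $\mathcal A$ is contained in a common stable side of the two rectangles, contradicting the fact that it was constructed as the transverse interior crossing of the nearly vertical $\gamma^s$ with the nearly horizontal $\gamma^u$.

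For the inclusion $\pi(\Sigma_\ell)\subset\Omega$, I would trace the $f$-orbit of $y=\pi(\{x_i\})$. At each return time the orbit sits in some $\omega_{x_j}\subset\Theta\subset R$; the intermediate iterates between consecutive returns also lie in $R$, because the level sets of the first return time $r$ were defined via the curves $\alpha_n^\pm\subset R$, so each excursion from $\Theta$ back to $\Theta$ takes place inside $R$. The backward orbit is handled symmetrically using $\omega_k^u$. The main obstacle in the whole scheme is the width-decay estimate, since Lemma \ref{global} supplies only a distortion ratio; converting it into absolute exponential decay requires the uniform expansion of $J^u$ under the induced map, which itself rests on the critical-point analysis of Sect.\ref{critical}.
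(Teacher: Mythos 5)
Note first that the paper itself contains no proof of Lemma \ref{hyp}: it is quoted verbatim from \cite[Lemma 2.19]{Tak13'}, so the only meaningful comparison is with the argument that the constructions of Sect.~2 make natural. Your architecture is exactly that expected argument: the $\omega^s_k$ are nested proper rectangles whose unstable width tends to zero (induced expansion plus the distortion estimate of Lemma \ref{global}), the $\omega^u_k$ are nested horizontal strips whose stable thickness tends to zero by the strong dissipation, the two limit curves cross in a single point, continuity follows from the shrinking diameters, and the inclusion in $\Omega$ comes from the whole orbit staying in $R$ together with the characterization $\Omega=\{x\in R\colon f^nx\in R\text{ for every }n\in\mathbb Z\}$ (you trace the orbit in $R$ but never invoke this characterization explicitly; it is the step that actually yields $\pi(\Sigma_\ell)\subset\Omega$). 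Incidentally, exponential decay of the widths is more than is needed (diameter tending to zero suffices), and $\bigcap_k\omega^u_k$ need not literally be an element of $\tilde\Gamma^u$, only a limit of such curves; neither point harms the argument.

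The genuine gap is in the injectivity step. If $\pi(\underline{x})=\pi(\underline{y})=z$ with $x_0\neq y_0$, you conclude that $z$ lies on the common stable side of $\omega_{x_0}$ and $\omega_{y_0}$ and declare this contradicts the ``transverse interior crossing''; but nothing in the construction prevents the crossing point from lying on that boundary curve, so as written there is no contradiction. The correct way to exclude it uses the forward coding: the stable sides of a proper rectangle $\omega$ are carried by $f^{\tau(\omega)}$ into the stable sides $\alpha_1^{\pm}$ of $\Theta$, whereas $\omega_{x_1}\in\mathcal A$ lies in the interior of $\Theta$; hence a point of the stable side of $\omega_{x_0}$ cannot belong to $\omega^s_1=\omega_{x_0}\cap f^{-\tau(\omega_{x_0})}\omega_{x_1}$, contradicting $z\in\bigcap_k\omega^s_k$. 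A second, related hole is the reduction ``shift the first disagreement to coordinate $0$'': this is legitimate when the sequences disagree at some coordinate $\geq 0$ (the same power of $f$ is then applied to both images), but if they agree at every coordinate $\geq0$ and first differ at some $-i<0$, then after the $i-1$ common backward inducing steps you are left with one point lying in $f^{\tau(\omega_{x_{-i}})}\omega_{x_{-i}}\cap f^{\tau(\omega_{y_{-i}})}\omega_{y_{-i}}$ with $x_{-i}\neq y_{-i}$, and these two horizontal strips need not be disjoint, so no contradiction arises from the disjointness hypothesis on $\mathcal A$ alone. Here one must use that the inducing times record genuine (iterated first-)return times to $\Theta$, so that a point in the interior of a proper rectangle has an intrinsic set of visit times incompatible with two distinct prescriptions, the boundary configurations being eliminated by the same stable-side argument as above. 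With these repairs the proof goes through; without them the injectivity claim is unsupported.
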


\subsection{Bounded distortions}\label{bdddist}
We need two more distortion results for points which are slow recurrent to the critical set.
For $x\in\Omega^u$ define 
$$d_{\rm crit}^u(x)=\begin{cases}|\zeta(x)-x|&\text{ if $x\in I(\delta)$};\\
1&\text{ otherwise,}\end{cases}$$
where $\zeta(x)$ is the critical point
on the $C^2(b)$-curve in $\tilde\Gamma^u$ containing $x$.
The function $d_{\rm crit}^u$ is a ``distance to the critical set".
For each $m\geq0$ define 
$$G_m^u=
\{x\in\Omega^u\colon \text{$d_{\rm crit}^u(f^{n}x)>  b^{\frac{n}{9}}$ for every $n\geq m$}\}.$$
The next lemma, the proof of which is a slight modification of \cite[Lemma 2.20]{Tak13'} and hence omitted here,
gives a distortion bound for derivatives along the unstable direction. 
\begin{lemma}\label{lyapbdd}
For every $m\geq0$ there exists a constant $D_m>0$ such that for
any proper rectangle $\omega$ intersecting $G_m^u$ and $\tau(\omega)>m$,
$$\sup_{x,y\in\Omega\cap\omega}\frac{\|D_yf^{\tau(\omega)}|E_y^u\|}{   \|D_xf^{\tau(\omega)}|E_x^u\|  } \leq D_m.$$
\end{lemma}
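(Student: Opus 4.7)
The plan is to reduce Lemma~\ref{lyapbdd} to the single-unstable-curve distortion of Lemma~\ref{global} via a stable-holonomy argument, absorbing the extra error with the slow-recurrence condition built into $G_m^u$. I fix a reference point $z\in G_m^u\cap\omega$ and let $\gamma^u\in\tilde\Gamma^u$ be the unstable curve through $z$; by Lemma~\ref{global} this curve meets $\omega$ in an arc joining the two stable sides. Using the local product structure of $\omega$ (stable sides in $W^s(P)$, unstable foliation by curves in $\tilde\Gamma^u$), for each $x\in\Omega\cap\omega$ I produce a point $x'\in\gamma^u\cap\omega$ lying on a common local stable leaf with $x$, and similarly $y'$ for $y$. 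Then
$$\frac{\|D_y f^{\tau(\omega)}|E_y^u\|}{\|D_x f^{\tau(\omega)}|E_x^u\|}=\frac{\|D_y f^{\tau(\omega)}|E_y^u\|}{\|D_{y'} f^{\tau(\omega)}|E_{y'}^u\|}\cdot\frac{\|D_{y'} f^{\tau(\omega)}|E_{y'}^u\|}{\|D_{x'} f^{\tau(\omega)}|E_{x'}^u\|}\cdot\frac{\|D_{x'} f^{\tau(\omega)}|E_{x'}^u\|}{\|D_x f^{\tau(\omega)}|E_x^u\|},$$
and the middle factor is already bounded by Lemma~\ref{global}.

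It remains to estimate the two outer factors, which compare derivatives at points sharing a stable leaf. Telescoping the logarithm as $\sum_{j=0}^{\tau(\omega)-1}[\log J^u(f^j x')-\log J^u(f^j x)]$, I bound each summand by the Lipschitz constant of $\log J^u$ times the displacement $|f^j x-f^j x'|$. Forward contraction of $\mathcal F^s$-leaves at rate $Cb$ gives $|f^j x-f^j x'|\leq (Cb)^j$. The Lipschitz constant of $\log J^u$ is uniformly bounded outside $I(\delta)$ and, by the hypothesis $\|D\log|\det Df|\|\leq C$ combined with the normal-form estimates near critical points, is of order $1/d_{\rm crit}^u$ on $I(\delta)$. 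Because the orbits of $x,x',z$ share the combinatorial return pattern dictated by $\omega\in\mathcal P_n$ and remain exponentially close in the stable direction, the slow recurrence $d_{\rm crit}^u(f^j z)>b^{j/9}$ for $j\geq m$ transfers to $d_{\rm crit}^u(f^j x)\geq\tfrac12 b^{j/9}$ for small $b$ and $j\geq m$. Hence the $j$-th summand for $j\geq m$ is bounded by $C(Cb)^j/b^{j/9}$, a uniformly convergent geometric series in $b^{8/9}$, while the first $m$ summands contribute at most a constant $D_m$ depending only on $m$, $\delta$ and $b$. The same estimate applied to the $(y,y')$ pair then yields the desired uniform bound.

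The main obstacle is the transfer of the slow-recurrence control from $z$ to the other points of $\omega$, and the verification that near the critical set the Lipschitz constant of $\log J^u$ is indeed dominated by $1/d_{\rm crit}^u$ with a uniform constant. Both are essentially contained in \cite[Lemma~2.20]{Tak13'}; granted these ingredients, the telescoping sum and the final choice of $D_m$ are routine.
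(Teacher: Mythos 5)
Your proposal has the right global mechanism (reduce to the one-curve distortion of Lemma \ref{global}, push across the stable direction, and let stable contraction beat the admissible approach rate $b^{n/9}$ to the critical set), which is indeed the spirit of the proof the paper defers to \cite[Lemma 2.20]{Tak13'} and of the argument displayed here for the H\"older analogue (Lemma \ref{variation}). But two of your steps assume exactly what the hypothesis does not provide. First, the point $x'$ ``on a common local stable leaf with $x$'' does not exist for free: $\mathcal F^s$ is defined only on $U$ and is $f$-compatible only while the orbit stays in $U$, and the existence of a long stable leaf through a given point of $\Omega\cap\omega$ requires recurrence control on \emph{that point's} orbit. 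The hypothesis supplies only one point of $G_m^u$ somewhere in $\omega$, while $x,y\in\Omega\cap\omega$ are arbitrary; this is precisely the obstruction emphasized in the introduction. The intended argument therefore runs the holonomy the other way: it joins $f^{\tau(\omega)}x$ to the stable side $\alpha_1^+$ of $\Theta$ along the long \emph{unstable} leaf inside $f^{\tau(\omega)}\omega$ (such leaves always exist, by backward contraction), and then compares the two resulting points of $f^{-\tau(\omega)}\alpha_1^+$ using \cite[Lemma 2.18]{Tak13'}: since $\omega$ meets $G_m^u$ and $\tau(\omega)>m$, after some $k\leq m$ iterates this stable curve lies in a long stable leaf and contracts by $b^{1/2}$ per iterate, and the first $k\leq m$ iterates are what produce the $m$-dependence of the constant. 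Relatedly, your per-iterate rate $(Cb)^j$ is only valid while the orbit remains in $U$; in general only the $b^{1/2}$-rate after time $k\leq m$ is available.

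Second, the ``transfer'' of slow recurrence, $d_{\rm crit}^u(f^jx)\geq\tfrac12 b^{j/9}$ for all $j\geq m$ and all $x\in\Omega\cap\omega$, is not justified by ``the orbits remain exponentially close.'' Inside a proper rectangle the separation of two orbits at time $j<\tau(\omega)$ in the unstable direction is governed by backward contraction from time $\tau(\omega)$, i.e.\ it is of order $\rho_0^{\tau(\omega)-j}$, which is of order one when $j$ is close to $\tau(\omega)$ and in any case is not $O(b^{j/9})$; moreover a general point of $\Omega\cap\omega$ carries no a priori recurrence control and can approach the critical set far faster than $b^{j/9}$ at return times. Hence your Lipschitz-times-displacement bound, with Lipschitz constant of order $1/d_{\rm crit}^u(f^jx)$, does not close as written. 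A pointwise lower bound on $d_{\rm crit}^u$ along the orbit of $x$ could at best be argued combinatorially (at each return time $f^jx$ and the good point lie in the same element of $\mathcal P_1$, whose distance to $S$ is comparable to its size), not by metric closeness of orbits; the cleaner repair is to follow the decomposition through $\gamma^u(f^{\tau(\omega)}x)$ and $f^{-\tau(\omega)}\alpha_1^+$ sketched above, where the recurrence hypothesis enters only through \cite[Lemma 2.18]{Tak13'} and the sums to be controlled are geometric in $\rho_0$ and $b^{1/2}$.
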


The next lemma gives a distortion bound for Birkhoff averages of H\"older continuous functions.



\begin{lemma}\label{variation}
If $\varphi\colon\Omega\to\mathbb R$ is H\"older continuous, then 
for every $m\geq0$ there exists 
$K_{m,\varphi}>0$ such that
for
any proper rectangle $\omega$ intersecting $G_m^u$ and $\tau(\omega)>m$, 
$$\sup_{x,y\in\Omega\cap\omega}|S_{\tau(\omega)}\varphi(x)-S_{\tau(\omega)}\varphi(y)|\leq K_{m,\varphi}.$$
\end{lemma}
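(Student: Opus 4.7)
The plan is to adapt the proof of the companion derivative distortion Lemma \ref{lyapbdd} (i.e.\ \cite[Lemma 2.20]{Tak13'}) from logarithmic derivative variations to H\"older variations of $\varphi$. Let $\alpha>0$ and $L>0$ denote the H\"older exponent and constant of $\varphi$, and write $N=\tau(\omega)$. Given $x,y\in\Omega\cap\omega$, I would use the local hyperbolic product structure on the non-wandering set to pick an intermediate point $z=W^u_{\rm loc}(x)\cap W^s_{\rm loc}(y)\in\Omega\cap\omega$, and split
\[
|S_N\varphi(x)-S_N\varphi(y)|\leq\sum_{i=0}^{N-1}|\varphi(f^ix)-\varphi(f^iz)|+\sum_{i=0}^{N-1}|\varphi(f^iz)-\varphi(f^iy)|,
\]
so that in each of the two sums one only has to compare points on a common local unstable curve or on a common local stable leaf.

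The second (``stable'') sum is the easy part: the strong stable contraction on $\Omega$, coming from $|\det Df|=O(b)$ and the uniform positivity of $\lambda^u$, gives $|f^iz-f^iy|\leq C\theta^i|z-y|$ for some $\theta\in(0,1)$ independent of $x,y,\omega$, and H\"older continuity of $\varphi$ turns this into a convergent geometric series bounded by a constant depending only on $\varphi$. For the first (``unstable'') sum, $x$ and $z$ lie on a common $\gamma^u\cap\omega$, and I plan to combine Lemma \ref{global} with an intermediate-time version of Lemma \ref{lyapbdd} --- obtained by applying Lemma \ref{lyapbdd} to the sub-proper-rectangles $f^{t_k}\omega\in\mathcal P_{n-k}$ at the successive return times $t_k=\sum_{j<k}r(\omega_j)$, and interpolating using the monotone expansion of $f$ during each return period --- to derive the backward Koebe-type bound
\[
|f^ix-f^iz|\leq D_m\cdot\frac{|f^Nx-f^Nz|}{\|D_{f^ix}f^{N-i}|E^u\|}\leq\frac{C_m}{\|D_{f^ix}f^{N-i}|E^u\|},
\]
since $f^Nx,f^Nz$ both lie in the bounded set $f^N\omega\subset\Theta$.

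The hard part will be to establish uniform exponential growth of the denominator: $\|D_{f^ix}f^{N-i}|E^u\|\geq C\lambda^{N-i}$ for some $\lambda>1$ and all $i\geq m$. This is exactly where the hypothesis $\omega\cap G_m^u\neq\emptyset$ enters: it guarantees $d^u_{\rm crit}(f^nx)>b^{n/9}$ for $n\geq m$, so derivatives grow uniformly outside $I(\delta)$, while returns to $I(\delta)$ are absorbed by binding the orbit to a critical point and copying the exponential derivative growth along the critical orbit (\cite[Lemma 2.6 and Lemma 2.9]{Tak13'}). Granted this, H\"older continuity turns the unstable sum into $L(D_m\,{\rm diam}(\Theta))^\alpha\sum_{j=1}^{N-m}\lambda^{-\alpha j}$, bounded by $C_m/(1-\lambda^{-\alpha})$, while the first $m$ terms contribute at most $mL\,{\rm diam}(R)^\alpha$ via the crude bound $|f^ix-f^iz|\leq{\rm diam}(R)$. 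Combining the stable and unstable estimates then yields the desired constant $K_{m,\varphi}$; once the intermediate-time exponential growth of the unstable derivative is in hand, everything else is book-keeping.
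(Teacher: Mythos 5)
Your proposal has a genuine gap, and it sits exactly at the point the paper's machinery is designed to get around. The decomposition through an intermediate point $z=W^u_{\rm loc}(x)\cap W^s_{\rm loc}(y)\in\Omega\cap\omega$ presupposes a uniform local product structure on $\Omega$: a local stable leaf of uniform size through \emph{every} point of $\Omega\cap\omega$, with uniform contraction $|f^iz-f^iy|\leq C\theta^i$. At the first bifurcation this is precisely what is unavailable: the foliation $\mathcal F^s$ is defined only on the neighborhoods $U^\pm$ of the stable sides, and long stable leaves are known to exist only through points satisfying a slow-recurrence condition to the critical set -- this is the whole reason for introducing $G_m^u$ and the bad set $\Omega_*^u$. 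Uniform positivity of $\lambda^u(\mu)$ over invariant measures gives no pointwise uniform expansion along $E^u$ (which is characterized only by a $\limsup$ backward-contraction property), so the ``strong stable contraction on $\Omega$'' you invoke is unjustified. The same problem recurs in your ``hard part'': the hypothesis is that $\omega$ \emph{intersects} $G_m^u$, i.e.\ some single point of $\omega$ is slowly recurrent, whereas $x,y$ range over all of $\Omega\cap\omega$; the claim that this gives $d^u_{\rm crit}(f^nx)>b^{n/9}$ for $n\geq m$ for the arbitrary point $x$ does not follow, and transferring recurrence or intermediate-time derivative growth $\|D_{f^ix}f^{N-i}|E^u_{f^ix}\|\geq C\lambda^{N-i}$ from that one point to $x$ (through bound periods) is not book-keeping but essentially the content of the distortion lemmas themselves.

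The paper's proof avoids both issues and is considerably lighter. For $x\in\Omega\cap\omega$ it takes the long unstable leaf $\gamma^u(f^{\tau(\omega)}x)\subset f^{\tau(\omega)}\omega$, whose backward images contract like $\rho_0^n$, lets $\hat x$ be its intersection with $\alpha_1^+\subset W^s(P)$ and sets $x'=f^{-\tau(\omega)}\hat x$, and similarly $y'$; then $\sum_{n<\tau(\omega)}|f^nx-f^ny|^\theta$ is split through $x',y'$. The two outer sums are geometric series by the backward contraction of the long unstable leaves (no derivative growth or distortion input at all), while the middle sum compares two points on the single curve $f^{-\tau(\omega)}\alpha_1^+$, for which \cite[Lemma 2.18]{Tak13'} -- and this is where $\omega\cap G_m^u\neq\emptyset$ and $\tau(\omega)>m$ are actually used -- supplies some $k\leq m$ such that $f^{k-\tau(\omega)}\alpha_1^+$ lies in a long stable leaf and is contracted like $b^{n/2}$ under forward iteration; the first $k\leq m$ terms are bounded crudely and produce the $m$-dependence of $K_{m,\varphi}$. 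To repair your outline you would have to replace the product-structure point $z$ by this projection to $\alpha_1^+$ along unstable leaves and replace the pointwise hyperbolicity claims by the quoted leaf lemmas. A further small point: $\varphi$ is defined only on $\Omega$, so a triangle inequality through $\varphi(f^iz)$ with $z$ possibly outside $\Omega$ would need a H\"older extension; the paper sidesteps this by estimating only the distances $|f^nx-f^ny|^\theta$, since both orbits lie in $\Omega$.
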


\begin{proof}
Let $x\in\Omega\cap\omega$. There exists a nearly horizontal $C^1$-curve, denoted by $\gamma^u(f^{\tau(\omega)}x)$ (called a {\it long unstable leaf through $f^{\tau(\omega)}x$} in \cite{Tak13'}),
which is contained in $f^{\tau(\omega)}\omega$, joins the stable sides of $\Theta$ and 
satisfies
${\rm length}(f^{-n}\gamma)\leq C\rho_0^n$ for some $C>0$, $\rho_0\in(0,1)$ and all $n\geq0$.
Let $\hat x$ denote the point of intersection between $\gamma^u(f^{\tau(\omega)}x)$ and $\alpha_1^+$,
and set $x'=f^{-\tau(\omega)}\hat x$. 
Let $y\in\Omega\cap\omega$ and define $y'$ in the same way.
Let $\theta\in(0,1]$ be a H\"older exponent of $\varphi$.
We have
\begin{equation*}
 \sum_{n=0}^{\tau(\omega)-1}|f^nx-f^ny|^\theta\leq
 \sum_{n=0}^{\tau(\omega)-1}|f^nx-f^nx'|^\theta+\sum_{n=0}^{\tau(\omega)-1}|f^nx'-f^ny'|^\theta+
 \sum_{n=0}^{\tau(\omega)-1}|f^ny'-f^ny|^\theta.\end{equation*}
From the backward contraction, the first and the third summands are uniformly bounded.
For the second one,
by \cite[Lemma 2.18]{Tak13'} there exists $k\in[0,m]$ such that
the curve $f^{k-\tau(\omega)}\alpha_1^+$ is contracted exponentially by a factor $b^{\frac{1}{2}}$
under forward iteration (i.e., $f^{k-\tau(\omega)}\alpha_1^+$ is contained in a {\it long stable leaf} \cite[Sect.2.8]{Tak13'}). 
 Hence
 \begin{equation*}
 \sum_{n=0}^{\tau(\omega)-1}|f^nx-f^ny|^\theta\leq
 2C^\theta\sum_{n=0}^{\infty}\rho_0^{\theta n}+\sum_{n=0}^{k-1}|f^nx'-f^ny'|^\theta+
 C^\theta \sum_{n=0}^{\infty}b^{\frac{\theta n}{2}},\end{equation*}
which is bounded by
 a uniform constant depending only on $b$, $m$, $\theta$.
 Since $\omega$ and $x$, $y$ are arbitrary, the desired inequality follows.
\end{proof}

\noindent{\it Remark.
{\rm The function $x\in\Omega\mapsto \log \|D_xf|E_x^u\|$ is not covered by
Lemma \ref{variation} since it is not continuous at $Q$.}}

\subsection{Approximation of non ergodic measures by ergodic ones}\label{nonergod}
Let $\mathcal M^e(f)$ denote the set of $f$-invariant ergodic Borel probability measures.
\begin{lemma}\label{approximate}
For any continuous $\varphi\colon\Omega\to\mathbb R$, $\mu\in\mathcal M(f)$ and $\varepsilon>0$
there exists $\nu\in\mathcal M^e(f)$ such that
 $h(\nu)>0$, $|h(\mu)- h(\nu)|<\varepsilon$, $|\lambda^u(\mu)-\lambda^u(\nu)|<\varepsilon$
 and $|\int\varphi d\mu-\int\varphi d\nu|<\varepsilon$.
 \end{lemma}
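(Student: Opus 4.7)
The plan is a two-stage horseshoe approximation, first replacing $\mu$ by an invariant measure on one of the hyperbolic horseshoes of Section \ref{horse} and then replacing that by an ergodic measure on the same horseshoe.

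For the first stage I would invoke the exhaustion of $\Omega$ by the symbolic horseshoes $\Lambda_N=\pi(\Sigma_{\ell_N})$ from Lemma \ref{hyp}, built by letting $\mathcal A_N$ consist of all proper rectangles with inducing time at most $N$. A key approximation theorem developed for this first-bifurcation setting in \cite{Tak13',SenTak1,SenTak2} (itself an adaptation of Katok's horseshoe theorem) asserts that every $\mu\in\mathcal M(f)$ is the weak-$*$ limit of invariant measures $\mu_N\in\mathcal M(f|\Lambda_N)$ along which both $h(\mu_N)\to h(\mu)$ and $\lambda^u(\mu_N)\to\lambda^u(\mu)$. Continuity of $\varphi$ then yields $\int\varphi\,d\mu_N\to\int\varphi\,d\mu$. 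I would fix $N$ large enough that all three quantities are within $\varepsilon/2$ of those of $\mu$, and set $\Lambda=\Lambda_N$.

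In the second stage I would work on $\Lambda$, a uniformly hyperbolic basic set conjugate to a full shift on finitely many symbols. The restriction $\log J^u|_\Lambda$ is H\"older continuous by Lemma \ref{global} together with the fact that $\Lambda$ stays uniformly away from the critical set, while $\varphi|_\Lambda$ is merely continuous. By classical thermodynamic formalism for mixing subshifts of finite type, ergodic measures of strictly positive entropy are dense in $\mathcal M(f|\Lambda)$ with respect to the topology generated by weak-$*$ convergence together with the entropy functional. Applied to $\mu_N$, this produces an ergodic $\nu$ on $\Lambda$ with $h(\nu)>0$ and $|h(\nu)-h(\mu_N)|<\varepsilon/2$, weak-$*$ close to $\mu_N$; weak-$*$ closeness, combined with H\"older continuity of $\log J^u|_\Lambda$ and continuity of $\varphi$, then gives closeness of $\lambda^u$ and of $\int\varphi$ as well. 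Concatenating the two stages produces the desired $\nu$.

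The main obstacle is Stage 1: the simultaneous approximation of a general $\mu\in\mathcal M(f)$ in weak-$*$, entropy, and $\lambda^u$ by measures supported on uniformly hyperbolic horseshoes. This is delicate because $\log J^u$ is discontinuous on $\Omega$ (cf.\ the remark following Lemma \ref{variation}) and $\mu$ may charge neighborhoods of the tangency orbit. Fortunately this is exactly the content of the horseshoe approximation constructions in \cite{Tak13',SenTak1,SenTak2}, which the present argument can invoke as a black box; Stage 2 is then classical.
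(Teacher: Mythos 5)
The weight of your argument rests on the Stage 1 ``black box'', and that is where there is a genuine gap. What the cited references actually provide (and what the present paper uses elsewhere) are: a Katok-type horseshoe theorem for \emph{ergodic} measures of \emph{positive} entropy (the variant of \cite[Theorem S.5.9]{KatHas95} adapted in \cite{Tak13'} and used in Sect.~\ref{lowest}); an approximation of an arbitrary $\mu\in\mathcal M(f)$ by a not necessarily ergodic $\nu\in\mathcal M(f)$ with $h(\nu)>0$ and $\nu\{Q\}=0$ (\cite[Lemma 2.23]{Tak13'}); and convergence of $\lambda^u$ along weakly convergent sequences whose limit gives no mass to $Q$ (\cite[Lemma 4.4]{SenTak1}). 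None of these is the statement you invoke, namely that \emph{every} $\mu\in\mathcal M(f)$ --- possibly non-ergodic, of zero entropy, and charging $Q$ --- is a weak-$*$ limit of measures on the horseshoes $\Lambda_N$ with simultaneous convergence of entropy and of $\lambda^u$. The $\lambda^u$ part is exactly the delicate point: $\log J^u$ is discontinuous at $Q$ (see the remark after Lemma \ref{variation}), so weak-$*$ convergence $\mu_N\to\mu$ gives no control of $\lambda^u(\mu_N)$ when $\mu\{Q\}>0$, and the Katok-type theorem cannot be applied to $\mu$ itself since $\mu$ need not be ergodic nor have positive entropy. Establishing your Stage 1 claim would in effect require first approximating $\mu$ by ergodic measures with controlled entropy, exponent and $\int\varphi$ --- that is, the very lemma being proved --- so the proposal is circular (or at least unsupported) at its key step, while your Stage 2 is classical and not where the difficulty lies.

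For comparison, the paper avoids horseshoes altogether: it first applies \cite[Lemma 2.23]{Tak13'} to replace $\mu$ by $\nu\in\mathcal M(f)$ with $h(\nu)>0$, $\nu\{Q\}=0$, and entropy and unstable exponent within $\varepsilon$ of those of $\mu$ (the construction also keeps $\int\varphi\,d\nu$ close to $\int\varphi\,d\mu$); it then uses that $f|\Omega$ is a factor of the full shift on two symbols \cite[Proposition 3.1]{SenTak2}, hence has specification \cite{Sig74}, so that ergodic measures are entropy-dense \cite{EizKifWei94}, yielding ergodic $\xi_n\to\nu$ with $h(\xi_n)\to h(\nu)$; finally, $\nu\{Q\}=0$ together with \cite[Lemma 4.4]{SenTak1} gives $\lambda^u(\xi_n)\to\lambda^u(\nu)$, and continuity of $\varphi$ gives $\int\varphi\,d\xi_n\to\int\varphi\,d\nu$. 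If you wish to keep a horseshoe picture, the honest order is to perform such a reduction first (ergodic, positive entropy, no atom at $Q$) and only then invoke the Katok-type construction; as written, your Stage 1 assumes away precisely the obstruction coming from the discontinuity of $\log J^u$ at $Q$.
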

 
 \begin{proof}
 From \cite[Lemma 2.23]{Tak13'} there exists $\nu\in\mathcal M(f)$ such that
 $h(\nu)>0$, $\nu\{Q\}=0$,
  $|h(\mu)-h(\nu)|<\varepsilon$ and $|\lambda^u(\mu)-\lambda^u(\nu)|<\varepsilon.$ 
We note that 
$f|\Omega$ is a factor of the full shift on two symbols
 \cite[Proposition 3.1]{SenTak2}, and therefore has the specification 
 \cite[Proposition 1(b)]{Sig74}.
Hence, ergodic measures are entropy-dense
\cite{EizKifWei94}:
there exists a sequence
$\{\xi_n\}_n$ in $\mathcal M^e(f)$ such that $\xi_n\to\nu$ and $h(\xi_n)\to h(\nu)$ as $n\to\infty$. 
By \cite[Lemma 4.4]{SenTak1} and $\nu\{Q\}=0$, we obtain  $\lambda^u(\xi_n)\to\lambda^u(\nu)$. 
 \end{proof}

 \section{Proof of the theorem}
In this section we complete the proof of the theorem.

\subsection{Outline}\label{outline}
For the rest of this paper we assume $\varphi\colon \Omega\to\mathbb R$ is continuous
 and $\beta\in I_\varphi\setminus I_\varphi'$.
 We prove the theorem by estimating $B_\varphi^u(\beta)$ from both sides.
  This is done along the line of \cite{Tak13'},
  but there is one key difference.

In Sect.\ref{lowest} we estimate $B_\varphi^u(\beta)$ from below,
 by constructing a large subset of $\Omega_\varphi^u(\beta)$.
  For the upper estimate, define
$$\Omega_*^u=\Omega^u\setminus\bigcup_{m=0}^\infty G_m^u,$$
and split $\Omega_\varphi^u(\beta)=\Pi_1\cup\Pi_2,$
where $$\Pi_1=\bigcup_{m=0}^\infty \Omega^u(\beta)\cap G_m^u\ \text{ and }\ \Pi_2=\Omega^u(\beta)\cap\Omega_*^u.$$
The upper estimate of $\dim_H^u(\Pi_1)$ is done in Sect.\ref{up1} 
in much the same way as in \cite{Tak13'}.
The key difference from \cite{Tak13'} is that $\Pi_2$ can be nonempty. 
To bypass this problem, in Sect.\ref{dimens} we estimate the dimension of the larger set $\Omega_*^u$ 
from above.

\subsection{Lower estimate of $B_\varphi^u(\beta)$}\label{lowest}
   Define
\begin{equation}\label{Feps}
d^{u}_\varepsilon={\sup}\left\{\frac{h(\mu)}{\lambda^u(\mu)}\colon\mu\in\mathcal
M(f),\ \left|\int\varphi d\mu
-\beta\right|<\varepsilon\right\}.\end{equation}
We also define $d^{u,e}_\varepsilon$ by restricting the range of the supremum to $\mathcal M^e(f)$.
We shall show 
\begin{equation}\label{lowe}
B_\varphi^u(\beta)\geq \displaystyle{\lim_{\varepsilon\to0}d^{u,e}_\varepsilon}.\end{equation}
Since
$\displaystyle{\lim_{\varepsilon\to0}d^u_\varepsilon=
\lim_{\varepsilon\to0}d^{u,e}_\varepsilon}$ from Lemma \ref{approximate}, 
the desired lower estimate of $B_\varphi^u(\beta)$ follows.
The idea is to construct a sequence of horseshoes in the sense of Sect.\ref{horse} with Birkhoff averages arbitrarily close to $\beta$,
and then glue these horseshoes together to construct a set of points whose Birkhoff averages are precisely $\beta$.

Let  $\{\mu_n\}_n$ be a sequence in $\mathcal M^e(f)$ such that
$|\int\varphi d\mu_n-\beta|\to0$ 
and  $h(\mu_n)/\lambda^u(\mu_n)$ converges as $n\to\infty$.
Since $\varphi$ is continuous and $\mathcal M(f)$ is compact with respect to the topology of weak convergence,
$c_\varphi=\min\{\int\varphi d\mu\colon\mu\in\mathcal M(f)\}$ and 
$d_\varphi=\max\{\int\varphi d\mu\colon\mu\in\mathcal M(f)\}$.
Since
both $c_\varphi$ and $d_\varphi$ are attained by elements of $\mathcal M^e(f)$,
considering linear combinations of them and then 
using Lemma \ref{approximate} one can show there indeed exists such a sequence.
If $h(\mu_n)\to0$, then $h(\mu_n)/\lambda^u(\mu_n)\to0$ from
$\inf\{\lambda^u(\mu)\colon\mu\in\mathcal M(f)\}>0$, and so \eqref{lowe} is obvious.
Hence we assume $h(\mu_n)>0$.
In what follows we first assume $\varphi$ is H\"older continuous, and prove \eqref{lowe}. 
Lastly we indicate necessary minor modifications to treat merely continuous $\varphi$.

If $\varphi$ is H\"older continuous, then 
slightly modifying the proof of \cite[Lemma 2.21]{Tak13'} one can prove a variant of well-known Katok's theorem \cite[Theorem S.5.9]{KatHas95}:
for each $n$ there exist a positive integer $q_n$ and a family $\mathcal R_n$
of proper rectangles with the following properties:

\begin{itemize}
\item[(i)] for each $\omega\in\mathcal R_n$,
$\tau(\omega)=q_n$;

\item[(ii)] $|(1/q_n)\log \#\mathcal R_n-h(\mu_n)|<1/n;$

\item[(iii)] for any $x\in \bigcup_{\omega\in\mathcal R_n} \Omega\cap\omega$, 
$\left|(1/q_n)S_{q_n}\log J^u(x)-\lambda^u(\mu_n)\right|<1/n$;

\item[(iv)] for any $x\in \bigcup_{\omega\in\mathcal R_n} \Omega\cap\omega$, 
$\left|(1/q_n)S_{q_n}\varphi(x)-\int \varphi d\mu_n\right|<1/n$.

\end{itemize}
The only one difference from \cite[Lemma 2.21]{Tak13'} is (iv), which follows from Lemma \ref{variation}.

 The rest of the proof proceeds much in parallel to that of \cite{Tak13'}, and so we only give a sketch of the proof.
 For an integer $\kappa\geq1$ let
$$\mathcal R_n(\kappa)=\{\omega_0\cap f^{-q_n}\omega_1\cap\cdots\cap f^{-(\kappa-1)q_n}\omega_{\kappa-1}
\colon\omega_1,\ldots,\omega_{\kappa-1}\in\mathcal R_n\}.$$
Let $\{\kappa_n\}_{n=1}^\infty$ be a sequence of positive integers.
For each $k\geq1$ let $(N,s)$ be a pair of integers such that
$
k=\kappa_1+\kappa_2+\cdots+\kappa_{N-1}+s \ \text{and}\ \ 0\leq s< \kappa_{N}.$
Define $\mathcal S(k)$ to be the collection of all proper rectangles of the form
$$\omega_0\cap f^{-\kappa_1q_1}\omega_1\cap
\cdots\cap f^{-\kappa_1q_1-\cdots-\kappa_{N-1}q_{N-1}}\omega_{N},$$
where $\omega_n\in\mathcal R_{n}(\kappa_{n+1})$ $(n=0,\ldots,N-1)$ and 
$\omega_{N}\in\mathcal R_{N}(s)$.
The set $\bigcup_{\omega\in\mathcal S(k)}\omega$ is compact, and
decreasing in $k$.
Set
$$Z=\gamma^u(\zeta_0)\cap \bigcap_{k=1}^\infty\bigcup_{\omega\in\mathcal S(k)}\omega,$$
where $\gamma^u(\zeta_0)$ denotes the unstable side of $\Theta$ containing $\zeta_0$.
By appropriately choosing $\{\kappa_n\}_{n=1}^\infty$ so that the orbits of points in $Z$ spend longer and longer times
around the horseshoes as $n$ increases,
one can make sure that
$Z\subset \Omega_\varphi^u(\beta)$ and
\begin{equation*}
\dim_H^u(Z)\geq \lim_{n\to\infty}\frac{h(\mu_n)}{\lambda^u(\mu_n)}.\end{equation*}
Since $\{\mu_n\}_n$ is arbitrary, \eqref{lowe} holds.

If $\varphi$ is merely continuous, then take a sequence $\{\varphi_n\}_n$ of real-valued H\"older continuous 
functions on $\Omega$
 such that $\sup\{|\varphi(x)-\varphi_n(x)|\colon x\in\Omega\}<1/n$.
 Find $q_n$ and $\mathcal R_n$ as above, satisfying (i) (ii) (iii) and (iv)
 with $\varphi_n$ in the place of $\varphi$.
 Then
\begin{align*}\left|\frac{1}{q_n}S_{q_n}\varphi(x)-\int \varphi d\mu_n\right|<&\left|\frac{1}{q_n}(S_{q_n}\varphi(x)-  S_{q_n}\varphi_n(x)) \right|\\
& +\left|\frac{1}{q_n}S_{q_n}\varphi_n(x)-\int \varphi_n d\mu_n\right|+\left|\int\varphi_n -\varphi d\mu_n\right|\\
 <&\frac{1}{n}+\frac{1}{n}+\frac{1}{n},\end{align*}
and so the same argument prevails. \qed

\subsection{Upper estimate of $\dim_H^u(\Pi_1)$.}\label{up1}
From the next proposition and the countable stability of $\dim_H^u$,
we obtain $\dim_H^u(\Pi_1)\leq \displaystyle{\lim_{\varepsilon\to0}d^u_\varepsilon}$.

\begin{prop}\label{c}
 For every  $m\geq0$,
$\dim_H^u(\Omega^u(\beta)\cap G_m^u)\leq \displaystyle{\lim_{\varepsilon\to0}d^u_\varepsilon}.$
\end{prop}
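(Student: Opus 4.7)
The plan is to show $m_p^u(\Omega^u(\beta) \cap G_m^u) = 0$ for every $p > \lim_{\varepsilon \to 0} d^u_\varepsilon$, which at once yields $\dim_H^u(\Omega^u(\beta) \cap G_m^u) \leq \lim_{\varepsilon \to 0} d^u_\varepsilon$. Fix such $p$ and choose $\varepsilon_0 > 0$ small enough that $p > d^u_{3\varepsilon_0}$. Decompose
$$\Omega^u(\beta) \cap G_m^u = \bigcup_{N \geq m} E_N,\quad E_N = \bigl\{x \in G_m^u \cap \Omega^u(\beta) : \bigl|\tfrac{1}{n}S_n\varphi(x) - \beta\bigr| < \varepsilon_0 \text{ for every } n \geq N\bigr\}.$$
By countable stability of $\dim_H^u$ it suffices to bound $\dim_H^u(E_N) \leq p$ for each fixed $N$.

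Next I would build efficient covers. For each large integer $n$, let $\mathcal{C}_n$ be the family of proper rectangles $\omega \in \bigcup_k \mathcal{P}_k$ with $\tau(\omega) \in [n, Cn]$ and $\omega \cap E_N \neq \emptyset$. The unstable slices $\omega \cap W^u$ cover $E_N$, apart from the countable set of points whose forward orbits meet $\Theta$ only finitely often (which contributes zero to every positive Hausdorff measure). Lemmas \ref{global} and \ref{lyapbdd} give
$$\text{length}(\omega \cap W^u) \leq D_m \|D_{x_\omega} f^{\tau(\omega)}|E^u\|^{-1} \quad \text{for any } x_\omega \in \omega \cap \Omega,$$
hence
$$m_p^u(E_N) \leq D_m^p \liminf_{n \to \infty} \mathcal{H}_n, \qquad \mathcal{H}_n := \sum_{\omega \in \mathcal{C}_n} e^{-p S_{\tau(\omega)} \log J^u(x_\omega)}.$$

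The core is to show $\mathcal{H}_n \to 0$ by a variational-principle-type construction. Choose $x_\omega \in \omega \cap E_N$ and form the weighted empirical measure
$$\mu_n = \frac{1}{Z_n} \sum_{\omega \in \mathcal{C}_n} e^{-p S_{\tau(\omega)} \log J^u(x_\omega)} \sum_{k=0}^{\tau(\omega)-1} \delta_{f^k x_\omega}, \quad Z_n = \sum_{\omega \in \mathcal{C}_n} \tau(\omega) e^{-p S_{\tau(\omega)} \log J^u(x_\omega)}.$$
Any weak-$*$ accumulation point $\mu$ of $\{\mu_n\}$ is $f$-invariant, and Lemma \ref{variation} together with the choice $x_\omega \in E_N$ guarantees $|\int \varphi \, d\mu - \beta| \leq 2\varepsilon_0$. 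A Misiurewicz--Katok counting argument, of the flavor carried out in \cite{Tak13'}, then produces along a subsequence $n_j \to \infty$ the entropy lower bound
$$h(\mu) - p \int \log J^u \, d\mu \geq \limsup_{j \to \infty} \frac{1}{n_j} \log \mathcal{H}_{n_j}.$$
Invoking Lemma \ref{approximate} to pass from $\mu$ to a nearby ergodic $\nu$ with $|\int \varphi \, d\nu - \beta| < 3\varepsilon_0$, the defining property of $d^u_{3\varepsilon_0}$ forces $h(\nu) < p \lambda^u(\nu)$; hence the right-hand side above is strictly negative, $\mathcal{H}_n \to 0$ exponentially, and $m_p^u(E_N) = 0$.

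The main obstacle is the entropy lower bound in the previous step: the inducing time $\tau(\omega)$ is not constant across $\mathcal{C}_n$, so one cannot count rectangles at a single time scale and apply the standard Katok argument verbatim. The remedy is the usual pigeonhole extraction of a subfamily $\mathcal{C}_n(T) \subset \mathcal{C}_n$ with a common value $\tau \equiv T \in [n, Cn]$ carrying a definite fraction of $\mathcal{H}_n$; bounded distortion (Lemmas \ref{global}, \ref{lyapbdd}) and the rough uniformity of $\tau$ on $\mathcal{C}_n$ then allow the counting argument to go through at the uniform time $T$ and be transferred back to $\mathcal{C}_n$. No use is made of the restriction $\beta \in I_\varphi \setminus I_\varphi'$ at this stage; that will enter only in the complementary estimate for $\Omega_*^u$ in Sect.~\ref{dimens}.
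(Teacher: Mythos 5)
Your overall strategy (cover the good set by proper rectangles whose Birkhoff averages are pinned near $\beta$, bound lengths by distortion, and kill the weighted length sums by a variational/counting argument) is the same as the paper's, but there is a genuine gap at the very first step: the family $\mathcal C_n=\{\omega\colon\tau(\omega)\in[n,Cn],\ \omega\cap E_N\neq\emptyset\}$ need not cover $E_N$. For a point $x$ returning to $\Theta$ infinitely often, the cumulative inducing times of the cylinders containing $x$ increase by the successive first-return times to $\Theta$, and these return times are unbounded on $G_m^u$: a point can lie arbitrarily close to the curves $\alpha_k^{\pm}$, which accumulate on the parabola $f^{-1}\alpha_0^+\cap R$ bounding $S$ (hence have arbitrarily large first return time), while its distance to the critical point on its leaf stays of definite size, because the critical point sits at positive depth inside $S$ along that leaf; the condition $d^u_{\rm crit}(f^jx)>b^{j/9}$ therefore gives no bound of the form ``return time $\leq C\times$ elapsed time.'' Consequently the cumulative inducing times of $x$ can jump over the window $[n,Cn]$, and $\mathcal C_n$ misses such points, so you do not obtain covers of $E_N$ of arbitrarily small mesh and the estimate $m_p^u(E_N)\leq D_m^p\liminf_n\mathcal H_n$ is unjustified. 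This is not a removable convenience: the linear window is exactly what your pigeonhole extraction of a common inducing time $T$ (the engine of the Katok-type counting) needs, so weakening the window to restore the covering destroys that step.

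The paper's proof resolves precisely this tension differently. It first reduces to $\gamma^u(\zeta_0)$ and to points returning to $\Theta$ infinitely often, then covers at level $n$ by the $n$-fold cylinders $\mathcal P_n$ (finitely many after deleting a small disc $D_c(\zeta_0)$), accepting that $\tau$ is wildly non-constant on this family; instead of counting at a single time scale it builds the full shift over the finite alphabet $\mathcal A_{n,\varepsilon}$, weights periodic orbits by $\exp\bigl(-d^u_\varepsilon S_{\tau(\omega)}\log J^u\bigr)$, passes to a limit measure $\nu_0$, projects it to an $f$-invariant $\mu$ with the Abramov-type normalization $\sum_\omega\tau(\omega)\overline{\mu}(\omega)$, and uses $|\int\varphi\,d\mu-\beta|<\varepsilon$ together with $h(\mu)-d^u_\varepsilon\lambda^u(\mu)\leq0$ to force subexponential growth of $\sum_\omega{\rm length}(\omega^u)^{d^u_\varepsilon}$. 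To repair your argument you would either have to prove a return-time bound on $G_m^u$ that is not available (and, as explained, is false in general), or replace the common-$T$ counting by such an induced-shift pressure argument. Two smaller points: your appeal to Lemma \ref{variation} requires $\varphi$ H\"older, so merely continuous $\varphi$ needs the approximation step the paper carries out; and your entropy inequality only gives negativity of $\limsup_j\frac{1}{n_j}\log\mathcal H_{n_j}$ along the subsequence realizing the accumulation point, so you must argue for every accumulation point to conclude $\mathcal H_n\to0$.
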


\begin{proof}
Recall that $\gamma^u(\zeta_0)$ denotes the unstable side of $\Theta$ containing $\zeta_0$.
Since $\gamma^u(\zeta_0)$ contains a fundamental domain in $W^u$,
for any $x\in \Omega_\varphi^u(\beta)$ which is not the fixed point in $W^u$ there exists $n\in\mathbb Z$ such that
$f^nx\in\gamma^u(\zeta_0)$. 
From the countable stability and the $f$-invariance
of $\dim_H^u$,
$B_\varphi^u(\beta)=
\dim_H ^u(\Omega_\varphi^u(\beta)\cap\gamma^u(\zeta_0))$.

Set
$$\tilde\Omega_\varphi^u(\beta)=\{x\in\Omega_\varphi^u(\beta)\cap \gamma^u(\zeta_0)\colon f^nx\in\Theta\text{ for infinitely many $n>0$}\}.$$
Since points in $\Omega_\varphi^u(\beta)\cap\gamma^u(\zeta_0)$ which return to $\Theta$ under forward iteration
only finitely many times form a countable subset,
we have $B_\varphi^u(\beta)=\dim_H^u (\tilde\Omega_\varphi^u(\beta))$.
From now on we restrict ourselves to $\tilde\Omega_\varphi^u(\beta)$.

 For $c>0$ let $D_c(\zeta_0)$ denote the closed ball in $\gamma^u(\zeta_0)$ of
  radius $c$ about $\zeta_0$. Define
$$\mathcal A_{n,\varepsilon}=\left\{
 \omega\in\mathcal P_{n}\colon \omega\cap G_m^u\neq\emptyset,\ \omega\cap
 D_c(\zeta_0)=\emptyset,\
\inf_{x\in\omega\cap\gamma^u(\zeta_0)}\left|\frac{1}{\tau(\omega)}S_{\tau(\omega)}\varphi(x)-\beta\right|<\frac{\varepsilon}{2}\right\}.$$
Observe that $\mathcal A_{n,\varepsilon}$ is a finite set, because its elements 
do not intersect $D_c(\zeta_0)$.
For each $\omega\in\mathcal A_{n,\varepsilon}$
write $\omega^u=\omega\cap\gamma^u(\zeta_0)$ and set
$\mathcal A^u_{n,\varepsilon}=\{\omega^u\colon\omega\in\mathcal A_{n,\varepsilon}\}$.
Clearly we have
$$(\tilde\Omega^u(\beta)\cap G_m^u)\setminus D_c(\zeta_0)\subset\limsup_{n\to\infty}\bigcup_{
\omega^u\in\mathcal A_{n,\varepsilon}^u}\omega^u,$$
and there exist $C>0$ and $\rho_1\in(0,1)$ such that
for each $\omega\in\mathcal A_{n,\varepsilon}^u$,
$${\rm length}(\omega^u)\leq C\rho_1^n.$$
It is enough to show
\begin{equation}\label{pperd}
\limsup_{n\to\infty}\frac{1}{n}\log\sum_{\omega^u\in\mathcal A^u_{n,\varepsilon}}{\rm length}(\omega^u)^{ d^u_\varepsilon}\leq 0\ \text{ for any $\varepsilon>0$.}
\end{equation}
Indeed, if this holds, then 
for any $d>0$ we have
$$\limsup_{n\to\infty}\frac{1}{n}\log\sum_{A\in\mathcal A^u_{n,\varepsilon}}{\rm length}(\omega^u)^{ d^u_\varepsilon+d}\leq-d\log\rho_1.$$
It follows 
that $\sum_{A\in\mathcal A^u_{n,\varepsilon}}{\rm length}(\omega^u)^{ d^u_\varepsilon+d}$
has a negative growth rate as $n$ increases.
Therefore
the Hausdorff $( d^u_\varepsilon+d)$-measure of the set $(\tilde\Omega^u(\beta)\cap G_m^u)\setminus
D_c(\zeta_0)$
is $0$. Since $d>0$ is arbitrary,
$\dim_H^u((\tilde\Omega^u(\beta)\cap G_m^u)\setminus D_c(\zeta_0))\leq d^u_\varepsilon$, and 
by the countable stability of $\dim_H^u$ we obtain
$\dim_H^u(\tilde\Omega^u(\beta)\cap G_m^u)\leq d^u_\varepsilon$.
 Letting
$\varepsilon\to0$ yields the desired inequality in Proposition \ref{c}. 
\medskip

It is left to prove \eqref{pperd}.
Set $\ell=\#\mathcal A_{n,\varepsilon}$ and
Write
$\mathcal A_{n,\varepsilon}=\{\omega{(1)},\omega{(2)},\ldots,\omega{(\ell)}\}$
so that 
\begin{equation}\label{align}
\tau(\omega{(1)})\geq\tau(\omega{(s)})>m\ \text{ for every }s\in \{1,2,\ldots,t\}.\end{equation}
Let $\pi_{\ell}\colon\Sigma_\ell\to\bigcup_{\omega\in\mathcal A_{n,\varepsilon}}\omega$ denote the coding map defined in Sect.\ref{horse}
and  $\sigma\colon\Sigma_\ell\circlearrowleft$ the left shift.
Define 
$$B=\{\underline{a}\in\Sigma_\ell\colon\pi\underline{a}\subset W^s(P)\setminus\{P\}\}.$$
Proper rectangles can intersect each other only at their stable sides, and there is only one proper rectangle containing $P$
in its stable side. Hence,
for any $\underline{a}\in \Sigma_\ell\setminus B$ there exists a unique element 
of $\mathcal A_{n,\varepsilon}$ containing $\pi\underline{a}$ which we denote by
$\omega(\underline{a})$.
Define 
$\Psi\colon \Sigma_\ell\setminus B\to\mathbb R$ by
$$\Psi(\underline{a})=- d^u_\varepsilon\sum_{i=0}^{\tau(\omega(\underline{a}))-1}\log J^u(f^i(\pi\underline{a})).$$
Since $\pi(\Sigma_\ell)\subset\Omega\setminus\{Q\}$ and $\log J^u$ is continuous except at $Q$,
$\Psi$ is continuous.

Let
 $\mathcal M(\sigma)$ denote the space of $\sigma$-invariant Borel probability measures on $\Sigma_\ell$
 endowed with the topology of weak convergence.
For each $k\geq 1$ define an atomic probability measure $\nu_k\in\mathcal M(\sigma)$ concentrated on 
the set $E_k=\{\underline{a}\in \Sigma_\ell\colon \sigma^{k}\underline{a}=\underline{a}\}$ by
$$\nu_k=  
\left(\sum_{\underline{b}\in E_k}\exp\left(     S_{k}\Psi(\underline{b})            \right)\right)^{-1}\sum_{\underline{a}\in E_k}
\exp\left(S_{k}\Psi(\underline{a})\right)
\delta_{\underline{a}},$$
where $S_{k}\Psi=\sum_{i=0}^{k-1}\Psi\circ\sigma^i$ and
$\delta_{\underline{a}}$ denotes the Dirac measure at $\underline{a}$.
Let $\nu_0$ denote an accumulation point of the 
sequence $\{\nu_k\}_k$ in $\mathcal M(\sigma)$. Taking
a subsequence if necessary we may assume $\nu_k\to\nu_0$.
We have $\nu_0\in\mathcal M(\sigma)$.
Define a Borel probability measure $\overline{\mu}$ on $\pi(\Sigma_\ell)$ by 
$$\overline{\mu}=\sum_{\omega\in\mathcal A_{n,\varepsilon}}\nu_0|_{\pi^{-1}\omega}.$$
By \cite[Sublemma 3.5]{Tak13'},
$\nu_0(B)=0$ and so
 $\overline{\mu}$ is indeed a probability.
Define $\mu\in\mathcal M(f)$ by
 $$\mu=\left(\sum_{\omega\in\mathcal A_{n,\varepsilon}}
 \tau(\omega)\overline{\mu}(\omega)\right)^{-1}\sum_{\omega\in\mathcal A_{n,\varepsilon}}\sum_{i=0}^{\tau(\omega)-1}(f^i)_*(\overline{\mu}|_{\omega}).$$ 

We show
\begin{equation}\label{minus}
h(\mu)- d^u_\varepsilon\lambda^u(\mu)\leq0.
\end{equation}
To show this,
let $\omega\in\mathcal A_{n,\varepsilon}$ and
$x\in \omega$. Choose
$y\in \omega\cap\gamma^u(\zeta_0)$ such that 
$\left|(1/\tau(\omega))S_{\tau(\omega)}\varphi(y)-\beta\right|<\varepsilon/2.$
If $\varphi$ is H\"older continuous, then by Lemma \ref{lyapbdd}
and  $\tau(\omega)\geq 2n$
we have
\begin{align*}\left|\frac{1}{\tau(\omega)}S_{\tau(\omega)}\varphi(x)-\beta\right|\leq&
\left| \frac{1}{\tau(\omega)}(S_{\tau(\omega)}\varphi(x)-S_{\tau(\omega)}\varphi(y))\right|+
 \left|\frac{1}{\tau(\omega)}S_{\tau(\omega)}\varphi(y)-\beta\right|\\
 \leq& \frac{K_{m,\varphi}}{\tau(\omega)}+\frac{\varepsilon}{2}\leq \frac{K_{m,\varphi}}{2n}+\frac{\varepsilon}{2}   <\varepsilon.\end{align*}
  If $\varphi$ is merely continuous, then approximating $\varphi$ by a H\"older continuous 
 function we get the same inequality for sufficiently large $n$.
Since $\omega\in\mathcal A_{n,\varepsilon}$ and $x\in\omega$ are arbitrary, this implies
 $ |\int \varphi d\mu-\beta|<\varepsilon$.
Then \eqref{minus} follows from the definition of $d^u_\varepsilon$ in \eqref{Feps}.

Observe that 
\begin{align*}
\log\sum_{\underline{a}\in E_k}\exp(S_{k}\Psi(\underline{a}))
=-\sum_{\underline{a}\in E_k}\nu_k(\{\underline{a}\})
\log\nu_k(\{\underline{a}\})+k\int\Psi d\nu_k.\end{align*}
A slight modification of the argument in \cite[pp.220]{Wal82} shows that for
any integer $p$ with $1\leq p<k$,
\begin{equation}\label{x}
\frac{1}{k}\log\sum_{\underline{a}\in E_k}\exp(S_{k}\Psi(\underline{a}))\leq-\frac{1}{p}\sum_{\underline{a}\in E_p}\nu_k(\{\underline{a}\})
\log\nu_k(\{\underline{a}\})+\int\Psi d\nu_k
+\frac{2p\log\#E_p}{k}.\end{equation} 
Similarly to the proof of \cite[Sublemma 3.7]{Tak13'} one can show that
$\int\Psi d\nu_k\to\int \Psi d\nu_0$ as $k\to\infty$.
Letting $k\to\infty$ in \eqref{x},
\begin{equation*}
\limsup_{k\to\infty}\frac{1}{k}\log\sum_{\underline{a}\in E_k}\exp(S_{k}\Psi(\underline{a}))\leq
-\frac{1}{p}\sum_{\underline{a}\in E_p}\nu_0(\{\underline{a}\})
\log\nu_0(\{\underline{a}\})+\int\Psi d\nu_0.
\end{equation*}
Letting $p\to\infty$ we get
\begin{equation}\label{lem2}
\limsup_{k\to\infty}\frac{1}{k}\log\sum_{\underline{a}\in E_k}\exp(S_{k}\Psi(\underline{a}))\leq
h(\sigma;\nu_0)+\int\Psi d\nu_0,
\end{equation}
where $h(\sigma;\nu_0)$ denote the entropy of $\nu_0\in\mathcal M(\sigma)$.

To estimate the left-hand-side of \eqref{lem2} from below, set $E_k'=\{\underline{a}\in E_k\colon a_0=1\}.$
Let 
$\underline{a}\in E_k'$, $\underline{b}\in E_{k-1}'$ be such that
$a_i=b_i$ for every $0\leq i< k-1$.
Slightly modifying the proof of \cite[Sublemma 3.8]{Tak13'} one can show that
$\pi\underline{a}$ and $\pi\underline{b}$ are contained in the same proper rectangle
 with inducing time $>m$ and intersecting $G_m^u$. 
 Lemma \ref{lyapbdd} gives 
 $$\frac{\exp(S_{k}\Psi(\underline{a}))}{   \exp(S_{k-1}\Psi(\underline{b})) }=\frac{\exp(S_{k-1}\Psi(\underline{a}))}{   \exp(S_{k-1}\Psi(\underline{b})) }\cdot\exp(S_{0}\Psi(\sigma^{k-1}\underline{a}))\geq D_m^{-d^u_\varepsilon}\cdot D_m^{-2d^u_\varepsilon}{\rm length}(\omega^u(a_{k-1}))^{d^u_\varepsilon}.$$
Using this inequality repeatedly gives
\begin{align*}
\sum_{\underline{a}\in E_k}\exp(S_k\Psi(\underline{a}))&>
\sum_{\underline{a}\in E_k'} \exp(S_{k}\Psi(\underline{a}))=
\sum_{\underline{b}\in E_{k-1}'} \exp(S_{k-1}\Psi(\underline{b}))\sum_{\stackrel{\underline{a}\in E_k'}
{a_i=b_i\ 0\leq \forall i< k-1}}\frac{ \exp(S_{k}\Psi(\underline{a}))}
{ \exp(S_{k-1}\Psi(\underline{b}))}\\
&\geq\sum_{\underline{b}\in E_{k-1}'} \exp(S_{k-1}\Psi(\underline{b}))\cdot D_m^{-3d^u_\varepsilon}\sum_{\omega\in\mathcal A_{n,\varepsilon}}
{\rm length}(\omega^u)^{ d^u_\varepsilon}\\
&\geq\cdots\geq
\sum_{\underline{b}\in E_{1}'} \exp(S_{0}\Psi(\underline{b}))\left(D_m^{-3d^u_\varepsilon}\sum_{\omega\in\mathcal A_{n,\varepsilon}}
{\rm length}(\omega^u)^{d^u_\varepsilon}\right)^{k-1}\\
&\geq  \left(D_m^{-3d^u_\varepsilon}\sum_{\omega\in\mathcal A_{n,\varepsilon}}
{\rm length}(\omega^u)^{d^u_\varepsilon}\right)^{k}.\end{align*}
Hence
\begin{equation}\label{lem1}\liminf_{k\to\infty}\frac{1}{k}
\log\sum_{\underline{a}\in E_k}\exp(S_k\Psi(\underline{a}))\geq\log\sum_{\omega^u\in\mathcal A^u_{n,\varepsilon}}
{\rm length}(\omega^u)^{d^u_\varepsilon}- 3d^u_\varepsilon\log D_m.\end{equation}
Putting \eqref{lem2}  \eqref{lem1} together and then using \eqref{minus} yield
\begin{align*}
\frac{1}{n}\log\sum_{\omega\in\mathcal A^u_{n,\varepsilon}}
{\rm length}(\omega^u)^{d^u_\varepsilon}&\leq
\frac{1}{n}\left(h(\sigma;\nu_0)+\int \Psi d\nu_0\right)+\frac{3}{n} d^u_\varepsilon\log D_m\\
&=\frac{1}{n}(h(\mu)- d^u_\varepsilon\lambda^u(\mu))\sum_{\omega\in\mathcal A_{n,\varepsilon}}\tau(\omega)\overline{\mu}(\omega)+ \frac{3}{n} d^u_\varepsilon\log D_m\\
&   \leq \frac{3}{n} d^u_\varepsilon\log D_m.\end{align*}
This implies \eqref{pperd}, and hence finishes the proof of Proposition \ref{c}.
\end{proof}

\subsection{Upper estimate of $\dim_H^u(\Omega_*^u)$}\label{dimens}
We finish by proving the next
\begin{prop}\label{new}
$\dim_H^u(\Omega_*^u)\leq 2/\log (1/b)$.
\end{prop}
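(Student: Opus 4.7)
The plan is to represent $\Omega_*^u$ as a $\limsup$ set and bound its Hausdorff dimension via a covering argument plus Borel--Cantelli. First, $f$-invariance of $\Omega_*^u$ together with countable stability of $\dim_H^u$ and the fact that $\gamma^u(\zeta_0)$ contains a fundamental domain of $W^u$ (as used in Sect.\ref{up1}) reduces the problem to bounding $\dim_H^u(\Omega_*^u\cap\gamma^u(\zeta_0))$. Setting
$$A_n=\{x\in\Omega^u\cap\gamma^u(\zeta_0):d_{\rm crit}^u(f^nx)\leq b^{n/9}\},$$
the definition of $\Omega_*^u$ yields $\Omega_*^u\cap\gamma^u(\zeta_0)\subset\bigcap_{m\geq 1}\bigcup_{n\geq m}A_n$. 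By the limsup--Hausdorff lemma, it suffices to construct, for each $n$, a cover $\mathcal U_n$ of $A_n$ by unstable intervals such that $\sum_n\sum_{U\in\mathcal U_n}\mathrm{length}(U)^s$ is finite for $s=2/\log(1/b)$; then $H^s(\Omega_*^u\cap\gamma^u(\zeta_0))=0$ and the proposition follows.

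To produce $\mathcal U_n$, for $x\in A_n$ the iterate $f^nx$ lies within distance $b^{n/9}$ of a critical point $\zeta=\zeta(f^nx)\in\tilde\gamma^u\in\tilde\Gamma^u$. Pulling this neighborhood back by $f^n$ to $\gamma^u(\zeta_0)$, using Lemma \ref{global} for bounded distortion along the connecting chain of proper rectangles from Sect.\ref{induced map}, and invoking the expansion lower bound $\|D_xf^n|E_x^u\|\geq e^{\lambda_0 n}$ (with $\lambda_0>0$ independent of $b$ from \cite{CLR08}) along the initial orbit segment on which slow recurrence holds, one obtains for each such $x$ an unstable interval containing $x$ of length at most $Cb^{n/9}e^{-\lambda_0 n}$. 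Collecting these as $\zeta$ ranges over the critical points reached by $f^n(\gamma^u(\zeta_0))$ yields $\mathcal U_n$. For $s=2/\log(1/b)$ one has $(b^{n/9})^s=e^{-2n/9}$, so
$$\sum_{U\in\mathcal U_n}\mathrm{length}(U)^s\leq C\cdot\#\mathcal U_n\cdot e^{-2n/9}\cdot e^{-s\lambda_0 n},$$
which is summable in $n$ provided $\#\mathcal U_n$ grows no faster than $e^{\alpha n}$ with $\alpha<2/9+2\lambda_0/\log(1/b)$.

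The main obstacle is controlling the count $\#\mathcal U_n$. A naive estimate from the topological entropy of $f|\Omega$ (a factor of the full $2$-shift, hence at most $\log 2$) gives growth rate far exceeding $2/9$, so the topological-entropy bound alone is insufficient. The key observation is that the condition $d_{\rm crit}^u(f^nx)\leq b^{n/9}$ forces $x$ into a thin ``critical tube'' of very specific symbolic itinerary in the $\mathcal P$-inducing scheme of Sect.\ref{induced map}, and the quadratic nature of the tangency ensures such tubes are sparse as a function of the closeness scale. The exponent $1/9$ in the definition of $G_m^u$ is calibrated precisely so that this combinatorial sparsity, together with the $b^{n/9}$ factor from the pullback, overcomes the entropy growth and yields the announced bound $2/\log(1/b)$. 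Making this combinatorial count rigorous — in the spirit of, but more delicate than, the corresponding analysis in \cite{Tak13'} — is the substantive work of the proof.
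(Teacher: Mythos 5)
There is a genuine gap, and it is exactly the step you yourself defer: the bound on $\#\mathcal U_n$. As it stands your argument is a reduction, not a proof, since the ``combinatorial sparsity'' of critical tubes at a fixed time $n$ is asserted, not established; worse, it is not the mechanism that makes the proposition true, and there is good reason to believe no such per-$n$ sparsity holds. By the Location property of Sect.\ref{critical}, \emph{every} curve of $\tilde\Gamma^u$ contains a critical point, so a tube of radius $b^{n/9}$ sits inside the $f^n$-image of essentially every full branch of $f^n$ on $\gamma^u(\zeta_0)$; the number of such branches grows at the exponential rate of the topological entropy, which is of order $\log 2$ (in the one-dimensional limit $b\to 0$, the quadratic map at the top parameter, every branch genuinely contributes, since there $\Omega^u$ is a full interval). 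Thus the natural size of $\#\mathcal U_n$ is $e^{hn}$ with $h$ far above $2/9$, and with $s=2/\log(1/b)$ your series behaves like $\sum_n e^{n(h-2/9-s\lambda_0)}$, which diverges; optimizing over $s$ in a one-scale-at-a-time Borel--Cantelli argument gives at best a bound of order $9h/\log(1/b)$, not $2/\log(1/b)$ --- and the constant matters here, because $I_\varphi'$ is defined with precisely the threshold $2/\log(1/b)$. (Your uniform pointwise expansion $\|D_xf^n|E^u_x\|\geq e^{\lambda_0 n}$ for all points of $A_n$ is also unjustified at the first bifurcation, where hyperbolicity is only non-uniform, but this is secondary.)

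The idea missing from your sketch, and on which the paper's proof of Proposition \ref{new} rests, is sparsity of the deep returns \emph{in time along each individual orbit}, obtained from binding to the non-recurrent critical orbits. If $d_{\rm crit}^u(f^{k_1+\cdots+k_i}x)\leq b^{k_i/9}$, then the orbit shadows the critical orbit for a number of iterates proportional to $k_i\log(1/b)$, so the successive deep-return gaps satisfy $k_{i+1}\geq 2\eta k_i$ with $\eta\approx\frac{1}{20}\log(1/b)$, whence $k_1+\cdots+k_i\geq\eta^{i-1}$. The cover of $\Omega_*^u$ is then not a union of single-time covers but the nested multi-generation family $\mathcal Q(l_1,\ldots,l_q)$ indexed by whole sequences of deep-return times: the contraction factor of order $b^{l_{i+1}/9}$ coming from Lemma \ref{global} is accumulated multiplicatively over all generations, the per-generation branch count $2^{l_{i+1}}$ is absorbed into these factors, and the number of admissible time vectors $(l_{p+1},\ldots,l_q)$ with a given sum $L$ is controlled by a Stirling/composition estimate, which is summable only because the $l_i$ are forced to grow super-exponentially (so that $(q-p)/L\leq\eta^{-p}$ is tiny for large $p$). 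It is this interplay --- per-orbit temporal sparsity via binding plus a multi-scale nested cover --- and not any sparsity of tubes at a fixed time, that produces a bound of order $1/\log(1/b)$. Since your proposal contains neither the binding estimate $k_{i+1}\geq 2\eta k_i$ nor the multi-generation counting, the substantive content of the proposition remains unproved.
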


\begin{proof} 
If $x\in\Omega_*^u$, then there exist infinitely many $n\geq0$ such that
$d_{\rm crit}(f^nx)\leq b^{\frac{n}{9}}.$
Define a sequence $k_i=k_i(x)$ $(i=1,2,\ldots)$ of positive integers inductively as follows: 
$k_1=\min\{n>0\colon d_{\rm crit}(f^nx)\leq b^{\frac{n}{9}}\}.$
Given $k_1,\ldots,k_{i}$ with $d_{\rm crit}(f^{k_1+\cdots+k_i}x)\leq  b^{\frac{k_i}{9}}$,
define $k_{i+1}=\min\{n>0\colon d_{\rm crit}(f^{k_1+\cdots+k_i+n}x)\leq b^{\frac{n}{9}}\}.$

Define $\eta=\eta(b)\gg1$ by
\begin{equation}
\eta=\left[-\frac{1}{20}\log b\right],
\end{equation}
where $[$ $\cdot$ $]$ denotes the integer part.
Since $b^{\frac{k_i}{9}}\cdot \|Df^{2\eta k_i}\|<  b^{\frac{k_i}{9}}\cdot 5^{2\eta k_i} \ll1$, 
$f^{k_1+\cdots+k_i}x$ shadows the forward orbit of the binding critical point 
at least up to time $2\eta k_i$, namely
\begin{equation}\label{d1}
k_{i+1}(x)\geq 2\eta k_i(x).\end{equation}
From \eqref{d1} and $k_1(x)>1$
we get $k_{i}(x)\geq(2\eta)^{i-1}$, and
\begin{equation}\label{s2}k_{1}(x)+k_{2}(x)+\cdots+k_{i}(x)\geq
\eta^{i-1}.\end{equation}

Now, given a sequence $\{l_i\}_{i=1}^\infty$ of positive integers, define a collection 
$\mathcal Q(l_1,l_2,\ldots,l_i)$ 
of pairwise disjoint compact curves in $\gamma^u(\zeta_0)$ inductively as follows. Start with
$$\mathcal Q(l_1)= \{\gamma_{1}\subset\gamma^u(\zeta_0)\colon f^{l_1}\gamma_1\in\tilde\Gamma^u\text{ and $k_1(x)=l_1$ for some 
$x\in\gamma_1\cap\Omega_*^u$}\}.$$ 
Given $\mathcal Q(l_1,\ldots,l_{i})$,
for
each $\gamma_{i}\in\mathcal Q(l_1,\ldots,l_{i})$ set
$$\mathcal Q(\gamma_{i},l_{i+1})= \{\gamma_{i+1}\subset\gamma_{i}\colon
f^{l_1+\cdots+l_{i}+l_{i+1}}\gamma_{i+1}\in\tilde\Gamma^u\text{ and $k_{i+1}(x)=l_{i+1}$ for some 
$x\in\gamma_{i+1}\cap\Omega_*^u$}\},$$
and define 
$$\mathcal Q(l_1,\ldots,l_{i+1})=
\bigcup_{\gamma_{i}\in \mathcal Q(l_1,\ldots,l_{i}) }
\mathcal Q(\gamma_{i},l_{i+1}).$$
Obviously,
\begin{equation}\label{cardinal}
\mathcal Q(\gamma_{i},l_{i+1})<2^{l_{i+1}}.
\end{equation}

If $x\in\Omega_*^u$, then 
for each $i=1,2,\ldots$ there exists a unique element of $\mathcal Q(k_1(x),k_2(x),\ldots,k_i(x))$
containing $x$.
Hence  
$$\Omega_*^u\subset
\bigcup_{L=\eta^{i-1}}^\infty\bigcup_{l_1+\cdots+l_i=L}\bigcup_{\gamma_i\in \mathcal Q(l_1,\ldots,l_i)}\gamma_{i}.$$

Now, let $1\leq p<q$ and define
$$\Omega_*^{(p)}=\{f^{k_1(x)+k_2(x)+\cdots+k_p(x)}x\colon x\in\Omega_*^u\}.$$
If $x\in\Omega_*^{(p)}$, then
$k_{p+1}(x)+k_{p+2}(x)+\cdots+k_{q}(x)\geq
\eta^{q-1}.$
Hence
$$\Omega_*^{(p)}\subset
\bigcup_{\gamma_{q}\in \mathcal Q(l_1,\ldots, l_q)}f^{l_1+\cdots+l_p}\gamma_{q}=
\bigcup_{L=\eta^{q-1}}^\infty\bigcup_{l_{p+1}+\cdots+l_q=L}\bigcup_{\gamma_{q}\in \mathcal Q(l_1,\ldots,l_q)}f^{l_1+\cdots+l_p}\gamma_{q}.$$
From the countable stability and the $f$-invariance
of $\dim_H^u$,
$\dim_H^u(\Omega_*^u)=\dim_H^u(\Omega_*^{(p)}).$
To get a better estimate, we shall work with large $p$.

Let $d\in(2/\log (1/b),1)$. For each $i\geq p$ we have
\begin{align*}
\sum_{\gamma_{i+1}\in\mathcal Q(l_1,\ldots,l_{i+1})}
{\rm length}(f^{l_1+\cdots+l_p}\gamma_{i+1})^d&=\sum_{\gamma_{i}\in
\mathcal Q(l_1,\ldots,l_{i})}
{\rm length}(f^{l_1+\cdots+l_p}\gamma_{i})^d\\
&\times\sum_{\gamma_{i+1}\in\mathcal
Q(\gamma_{i},l_{i+1}) } \frac{{\rm length}(f^{l_1+\cdots+l_p}\gamma_{i+1})^d}
{{\rm length}(f^{l_1+\cdots+l_p}\gamma_{i})^d}.\end{align*}
On the second sum of the fractions, 
we have $f^{k_1+\cdots+k_{i}}\gamma_{i}\in\tilde\Gamma^u$ and 
${\rm length}(f^{k_1+\cdots+k_{i}}\gamma_{i+1})< 2b^{\frac{k_{i+1}}{9}}.$
From this and the bounded distortion in Lemma \ref{global},
\begin{equation}\label{d4}\frac{{\rm length}(f^{l_1+\cdots+l_p}\gamma_{i+1})}{{\rm length}(f^{l_1+\cdots+l_p}\gamma_i)}\leq
C\cdot\frac{{\rm length}(f^{l_1+\cdots+l_{i}}\gamma_{i+1})}{{\rm length}(f^{l_1+\cdots+l_{i}}\gamma_i)}\leq
3b^{\frac{l_{i+1}}{9}}.\end{equation} 
Using  \eqref{cardinal} \eqref{d4} and $d\in(2/\log (1/b),1)$,
$$\sum_{\gamma_{i+1}\in\mathcal Q(\gamma_{i},l_{i+1})
}\frac{{\rm length}(f^{l_1+\cdots+l_p}\gamma_{i+1})^d}{{\rm length}(f^{l_1+\cdots+l_p}\gamma_{i})^d} \leq
\#\mathcal Q(\gamma_{i},l_{i+1}) 3^db^{\frac{dl_{i+1}}{10}}<b^{\frac{dl_{i+1}}{20}}.$$ 
Plugging this
into the right-hand-side of the above equality we get
\begin{equation}\label{d-1}
\sum_{\gamma_i\in\mathcal Q(l_1,\ldots,l_{i+1})}{\rm length}(f^{l_1+\cdots+l_p}\gamma_{i+1})^d
\leq b^{\frac{d l_{i+1}}{20}}
\sum_{\gamma_{i}\in\mathcal Q(l_1,\ldots,l_{i})}{\rm length}(f^{l_1+\cdots+l_p}\gamma_{i})^d
.\end{equation} 
Using \eqref{d-1} inductively 
yields
\begin{equation*}
\sum_{\gamma_q\in\mathcal Q(l_1,\ldots,l_q)}{\rm length}(f^{l_1+\cdots+l_p}\gamma_q)^d
\leq L_pb^{\frac{d}{20}(l_{p+1}+\cdots+l_q)},\end{equation*} 
where $$L_p=\sum_{\gamma_{p+1}\in\mathcal Q(\gamma_p,l_{p+1})
}\frac{{\rm length}(f^{l_1+\cdots+l_p}\gamma_{p+1})^d}{{\rm length}(f^{l_1+\cdots+l_p}\gamma_{p})^d}.$$
Hence
\begin{align*}
\sum_{L=\eta^{q-1}}^\infty\sum_{l_{p+1}+\cdots+l_q=L}\sum_{\gamma_q\in\mathcal
Q(l_1,\ldots,l_q)}{\rm length}(f^{l_1+\cdots+l_p}\gamma_q)^d&\leq 
L_p\sum_{L=\eta^{q-1}}^\infty
b^{\frac{dL}{20}}\#\left\{
(l_{p+1},\ldots,l_{q})\colon \sum_{i=p+1}^q l_i=L\right\}.
\end{align*}
To estimate the right-hand side we use the following from Stirling's formula for factorials: for sufficiently small $\chi>0$ there exist $c(\chi)>0$ with $c(\chi)\to0$ as $\chi\to0$ such that for any two positive integers $p$, $q$ with $q/p\leq\chi$ one has
$\left(\begin{smallmatrix}p+q\\q\end{smallmatrix}\right)\leq e^{c(\chi)p}$.

The number of all
feasible $(l_{p+1},\ldots,l_{q})$ with $\sum_{i=p+1}^{q} l_{i}=L$ is bounded by the
number of ways of dividing $L$ objects into $q-p$ groups, which is
$\left(\begin{smallmatrix}L+q-p\\q-p\end{smallmatrix}\right)$.
Since $l_i\geq\eta^p$ for $i=p+1,\ldots,q$, we have $(q-p)/L\leq \eta^{-p}$,
which goes to $0$ as $p\to\infty$.
In particular,
there exists $p_0$ such that for all $p$, $q$ with $p_0\leq p<q$,
$$\#\left\{
(l_{p+1},\ldots,l_{q})\colon \sum_{i=p+1}^q l_{i}=L\right\}\leq \begin{pmatrix}L+q-p\\q-p\end{pmatrix}
\leq b^{-\frac{dL}{30}}.$$
\begin{align*}
\sum_{L=\eta^{q-1}}^\infty\sum_{l_{p+1}+\cdots+l_{q}=L}\sum_{\omega_q\in\mathcal
Q(l_1,\ldots,l_q)}{\rm length}(f^{l_1+\cdots+l_p}\gamma_q)^d
&\leq
L_p\sum_{L=\eta^{q-1}}^\infty      b^{\frac{bdL}{60}}.  
\end{align*} 
 The summand of the right-hand-side decays exponentially
in $q$, and so the
Hausdorff $d$-measure of $\Omega_*^{(p)}$ is zero.\end{proof}

From the lower estimate in Sect.\ref{lowest}, Proposition \ref{c} and Proposition \ref{new} we obtain
$$\lim_{\varepsilon\to0}d^u_\varepsilon\leq B_\varphi^u(\beta)=\max\{\dim_H^u(\Pi_1),\dim_H^u(\Pi_2)\}\leq\max\{\lim_{\varepsilon\to0}d^u_\varepsilon,2/\log(1/b)\}.$$
Since $I_\varphi\setminus I_\varphi'$ we have $B_\varphi^u(\beta)>2/\log(1/b)$, and so the above two inequalities are equalities. 
This completes the proof of the theorem. $\qed$

\subsection*{Acknowledgments}
Partially supported by the Grant-in-Aid for Young Scientists (B) of the JSPS, Grant No.23740121.

\bibliographystyle{amsplain}

\end{document}